\documentclass[11pt]{amsart}
\usepackage{graphicx}
\usepackage{amscd,amsthm}
\usepackage{amsmath,color}
\usepackage{amsfonts}
\usepackage{amssymb,appendix,wasysym}
\usepackage[latin1]{inputenc}
\usepackage[colorlinks=true]{hyperref}
\usepackage{mathrsfs}
\usepackage{yfonts}
\textwidth=6.5in \textheight=8.5in \topmargin=-1.0cm
\oddsidemargin=0.5cm \evensidemargin=0.5cm

\newtheorem{theorem}{Theorem}[section]

\newtheorem{lemma}[theorem]{Lemma}

\newcommand{\be}{\begin{equation}}
\newcommand{\ee}{\end{equation}}
\numberwithin{equation}{section}

\allowdisplaybreaks

\begin{document}
\title[A fractional Kirchhoff problem]{A fractional Kirchhoff problem\\ involving a singular term and a critical nonlinearity}

\author{Alessio Fiscella}
\address{Alessio Fiscella\\
 Departamento de Matem\'atica \\ Universidade Estadual de Campinas, IMECC\\
Rua S\'ergio Buarque de Holanda, 651, Campinas, SP CEP 13083--859 Brazil}
\email{\tt fiscella@ime.unicamp.br}

\keywords{Kirchhoff type problems, fractional Laplacian, singularities, critical nonlinearities, perturbation methods.\\
\phantom{aa} {\it 2010 AMS Subject Classification}. Primary:  35J75, 35R11, 49J35;  Secondary: 35A15, 45G05, 35S15.}

\begin{abstract}
In this paper we consider the following critical nonlocal problem
$$
\left\{\begin{array}{ll}
M\left(\displaystyle\iint_{\mathbb{R}^{2N}}\frac{|u(x)-u(y)|^2}{|x-y|^{N+2s}}dxdy\right)(-\Delta)^s u = \displaystyle\frac{\lambda}{u^\gamma}+u^{2^*_s-1}&\quad\mbox{in } \Omega,\\
u>0&\quad\mbox{in } \Omega,\\
u=0&\quad\mbox{in } \mathbb{R}^N\setminus\Omega,
\end{array}\right.
$$
where $\Omega$ is an open bounded subset of $\mathbb R^N$ with continuous boundary, dimension $N>2s$ with parameter $s\in (0,1)$, $2^*_s=2N/(N-2s)$ is the fractional critical Sobolev exponent, $\lambda>0$ is a real parameter, exponent $\gamma\in(0,1)$, $M$ models a Kirchhoff type coefficient, while $(-\Delta)^s$ is the fractional Laplace operator.
In particular, we cover the delicate degenerate case, that is when the Kirchhoff function $M$ is zero at zero.
By combining variational methods with an appropriate truncation argument, we provide the existence of two solutions.
\end{abstract}

\maketitle

\section{Introduction}
This paper is devoted to the study of a class of Kirchhoff type problems driven by a nonlocal fractional operator and involving a singular term and a critical nonlinearity.
More precisely, we consider
\begin{equation}\label{P}
\left\{\begin{array}{ll}
\left(\displaystyle\iint_{\mathbb{R}^{2N}}\frac{|u(x)-u(y)|^2}{|x-y|^{N+2s}}dxdy\right)^{\theta-1} (-\Delta)^s u = \displaystyle\frac{\lambda}{u^\gamma}+u^{2^*_s-1}&\quad\mbox{in } \Omega,\\
u>0&\quad\mbox{in } \Omega,\\
u=0&\quad\mbox{in } \mathbb{R}^N\setminus\Omega,
\end{array}
\right.
\end{equation}
where $\Omega$ is an open bounded subset of $\mathbb R^N$ with continuous boundary, dimension $N>2s$ with parameter $s\in (0,1)$, $2^*_s=2N/(N-2s)$ is the fractional critical Sobolev exponent, $\lambda>0$ is a real parameter, exponent $\theta\in(1,2^*_s/2)$ while $\gamma\in(0,1)$.
Here $(-\Delta)^s$ is the fractional Laplace operator defined, up to normalization factors, by the Riesz potential as
$$(-\Delta)^s \varphi(x)=\int_{\mathbb{R}^N}\frac{2\varphi(x)-\varphi(x+y)-\varphi(x-y)}{|y|^{N+2s}}dy,\quad x\in\mathbb R^N,$$
along any $\varphi\in C^\infty_0(\Omega)$; we refer to \cite{DPV} and the recent monograph \cite{MBRS} for further details on the fractional Laplacian and the fractional Sobolev spaces $H^s(\mathbb R^N)$ and $H^s_0(\Omega)$.

As well explained in \cite{DPV}, problem \eqref{P} is the fractional version of the following nonlinear problem
\begin{equation}\label{Pc}
\left\{\begin{array}{ll}
-M\left(\int_\Omega|\nabla u(x)|^2dx\right)\Delta u = \displaystyle\frac{\lambda}{u^\gamma}+u^{2^*-1}&\quad\mbox{in } \Omega,\\
u>0&\quad\mbox{in } \Omega,\\
u=0&\quad\mbox{in } \partial\Omega,
\end{array}
\right.
\end{equation}
where $\Delta$ denotes the classical Laplace operator while, just for a general discussion, $M(t)=t^{\theta-1}$ for any $t\in\mathbb R^+_0$. In literature, problems like \eqref{P} and \eqref{Pc} are called of Kirchhoff type whenever the function $M:\mathbb R^+_0\to\mathbb R^+_0$ models the Kirchhoff prototype, given by
\begin{equation}\label{prot}
M(t)=a +bt^{\theta-1},\quad a,\,b\ge0,\,\;a+b>0,\,\;\theta\ge1.
\end{equation}
In particular, when $M(t)\ge \mbox{\textit{ constant}}>0$ for any $t\in\mathbb R^+_0$, Kirchhoff problems are said to be \textit{non--degenerate}
and this happens for example if $a>0$ in the model case \eqref{prot}.
While, if $M(0)=0$ but $M(t)>0$ for any $t\in\mathbb R^+$, Kirchhoff problems are called
\textit{degenerate}. Of course, for \eqref{prot} this occurs when $a=0$.

This kind of nonlocal problems has been widely studied in recent years. We refer to \cite{LLT, LKLT, LZLT, LS, LTLW} for different Kirchhoff problems with $M$ like in \eqref{prot}, driven by the Laplace operator and involving a singular term of type $u^{-\gamma}$. In \cite{LS}, the authors study a Kirchhoff problem with a singular term and a Hardy potential, by using the Nehari method. The same approach is used in \cite{LZLT} for a singular Kirchhoff problem with also a subcritical term. In \cite{LLT}, strongly assuming $a>0$ in \eqref{prot}, the authors prove the existence of two solutions for a Kirchhoff problem like \eqref{Pc}, by combining perturbation and variational methods. While in \cite{LKLT}, they provide a uniqueness result for a singular Kirchhoff problem involving a negative critical nonlinearity, by a minimization argument. By arguing similarly to \cite{LLT}, the authors of \cite{LTLW} give the existence of two solutions for a critical Kirchhoff problem with a singular term of type $|x|^{-\beta}u^{-\gamma}$.

Problem \eqref{P} has been studied in \cite{BIMP} when $\theta=1$, namely without a Kirchhoff coefficient. In \cite{BIMP}, they prove the existence of two solutions by applying the sub/supersolutions and Sattinger methods. In \cite{CMSS}, the authors generalize the results of Section 3 of \cite{BIMP} to the delicate case of the $p$--fractional Laplace operator $(-\Delta)^s_p$. While in the last section of \cite{AMPP}, the authors provide the existence of a solution for nonlinear fractional problems with a singularity like $u^{-\gamma}$ and a fractional Hardy term, by perturbation methods. Concerning fractional Kirchhoff problems involving critical nonlinearities, we refer to \cite{AFP, CP, FP, FP2, FV, PP} for existence results and to \cite{BFL, F, PS, PXZ, XZQ} for multiplicity results.
In particular, in \cite{CP, FP, FP2, PP} different singular terms appear, but given by the fractional Hardy potential.
 
Inspired by the above works, we study a multiplicity result for problem \eqref{P}. As far as we know, a fractional Kirchhoff problem involving a singular term of type $u^{-\gamma}$ has not been studied yet. Thus, we can state our result as follows.
\begin{theorem}\label{main} Let $s\in(0,1)$, $N>2s$, $\theta\in(1,2^*_s/2)$, $\gamma\in(0,1)$ and let $\Omega$ be an open bounded subset of $\mathbb R^N$ with $\partial\Omega$ continuous. Then, there exists $\overline{\lambda}>0$ such that for any $\lambda\in(0,\overline{\lambda})$ problem \eqref{P} has at least two different solutions.
\end{theorem}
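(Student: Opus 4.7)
I will obtain the two solutions as two distinct critical points of the formal energy
\[
J_\lambda(u) = \frac{1}{2\theta}\norm{u}^{2\theta} - \frac{\lambda}{1-\gamma}\int_\Omega (u^+)^{1-\gamma}\,dx - \frac{1}{2^*_s}\int_\Omega (u^+)^{2^*_s}\,dx,
\]
defined on $X_0 = \{u\in H^s(\mathbb R^N) : u = 0 \text{ a.e.\ in } \mathbb R^N\setminus\Omega\}$, where $\norm{u}^2$ is the squared Gagliardo seminorm appearing in \eqref{P}. Since the integral $\int(u^+)^{1-\gamma}dx$ is continuous but not of class $C^1$ on $X_0$, one cannot apply classical critical point theory to $J_\lambda$ directly; following \cite{BIMP,LLT} I will work with a regularized functional (replacing $u^{-\gamma}$ by $(u^+ + 1/n)^{-\gamma}$), find two critical points for every $n$, and pass to the limit, using a fractional Hardy estimate to legitimize the singular integral in the limiting weak formulation.

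\textbf{First solution via local minimization.} Because $1-\gamma<2\theta<2^*_s$, elementary Sobolev estimates produce $\rho_0,\overline\lambda>0$ such that, for every $\lambda\in(0,\overline\lambda)$,
\[
\inf_{\overline{B_{\rho_0}}} J_\lambda =: m < 0 < \inf_{\partial B_{2\rho_0}} J_\lambda,
\]
where $B_r\subset X_0$ denotes the open ball of radius $r$ centred at the origin. Ekeland's variational principle on the complete metric space $\overline{B_{\rho_0}}$ will produce a Palais--Smale-type sequence for the regularized functional at level $m$, contained in $B_{\rho_0}$. Weak compactness gives $u_n\rightharpoonup u_1$ in $X_0$; a Brézis--Lieb decomposition for both $\norm{\cdot}^{2\theta}$ and $\int(\cdot^+)^{2^*_s}dx$, combined with Fatou's lemma for the singular integrand and the strict interior bound $\norm{u_1}<\rho_0$, forces $u_n\to u_1$ strongly. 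Then $u_1$ is a weak solution of \eqref{P} with $J_\lambda(u_1)=m<0$, hence $u_1\not\equiv 0$; positivity in $\Omega$ follows by testing against $u_1^-$ and a fractional strong maximum principle.

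\textbf{Second solution via mountain pass.} Since $2^*_s>2\theta$, for any nonnegative $\varphi\in X_0\setminus\{0\}$ one has $J_\lambda(t\varphi)\to-\infty$ as $t\to+\infty$, so there exists $e\in X_0$ with $\norm{e}>2\rho_0$ and $J_\lambda(e)<m$. The mountain pass scheme on the regularized functionals, combined with the separation above, produces a Palais--Smale sequence at the min-max level
\[
c_\lambda := \inf_{\sigma\in\Gamma}\,\max_{t\in[0,1]} J_\lambda(\sigma(t)) \ge \inf_{\partial B_{2\rho_0}} J_\lambda > 0, \qquad \Gamma = \{\sigma\in C([0,1],X_0):\sigma(0)=0,\ \sigma(1)=e\}.
\]
The decisive step is the sub-threshold estimate
\[
c_\lambda < c^* := \frac{2^*_s - 2\theta}{2\theta\,2^*_s}\; S_s^{\,2^*_s\theta/(2^*_s - 2\theta)},
\]
with $S_s$ the best fractional Sobolev constant; $c^*$ is precisely the min-max value of the Kirchhoff--critical part $\frac{1}{2\theta}\norm{u}^{2\theta}-\frac{1}{2^*_s}\norm{u}_{2^*_s}^{2^*_s}$ and plays the role of the Brézis--Nirenberg compactness ceiling. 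I will obtain it by evaluating the mountain pass along $\sigma(t)=(tT)\,U_\varepsilon$, where $U_\varepsilon$ is the standard truncated, rescaled family of fractional Talenti extremals centred at an interior point of $\Omega$, expanding in $\varepsilon$, and absorbing the $\lambda$-dependent contribution of the singular term into the lower-order remainder by shrinking $\overline\lambda$ if necessary. A fractional concentration--compactness argument, adapted to the Kirchhoff weight $\norm{\cdot}^{2(\theta-1)}$, then shows that any Palais--Smale sequence below $c^*$ is precompact, giving a second solution $u_2$ with $J_\lambda(u_2)=c_\lambda>0>m=J_\lambda(u_1)$.

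\textbf{Main obstacle.} The hardest part is the sub-threshold inequality $c_\lambda<c^*$ in the \emph{degenerate} Kirchhoff regime. The vanishing of $\norm{\cdot}^{2(\theta-1)}$ at the origin couples the Gagliardo seminorm and the critical integral nonlinearly in the test-function expansion (one must estimate $\norm{tU_\varepsilon}^{2\theta}$ as a whole, not $\norm{U_\varepsilon}^2$ and $\norm{U_\varepsilon}_{2^*_s}^{2^*_s}$ separately as in the classical Brézis--Nirenberg scheme), and it likewise complicates the Brézis--Lieb decomposition of Palais--Smale sequences, since the limit of the Kirchhoff coefficient need not equal the coefficient of the limit. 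The assumption $\theta<2^*_s/2$ is precisely what keeps the Kirchhoff exponent $2\theta$ strictly subcritical, securing both the mountain pass geometry and the effectiveness of the test-function expansion; the singular exponent $\gamma\in(0,1)$ only contributes lower-order corrections, which are absorbed by choosing $\lambda$ sufficiently small.
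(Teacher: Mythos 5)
Your overall two-solution blueprint matches the paper: local minimization for the negative-energy solution and a mountain-pass argument with a Br\'ezis--Nirenberg-type compactness ceiling for the positive-energy one, passing to a limit of regularized problems. But there are two places where your plan diverges from what is actually needed, one a harmless alternative route and one a conceptual slip that would derail the argument.

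For the first solution, the paper does \emph{not} regularize and does \emph{not} invoke Ekeland. It minimizes the singular functional $J_\lambda$ directly over $\overline B_\rho$, uses Br\'ezis--Lieb together with the sign of $\frac1{2\theta}\|w_k\|^{2\theta}-\frac1{2^*_s}\|w_k^+\|_{2^*_s}^{2^*_s}$ on the sphere to conclude that the infimum is attained at some $u_0$, and then derives the Euler--Lagrange equation by a bespoke perturbation: first test with nonnegative directions and Fatou to get a variational inequality, then exploit the Pohozaev-type identity $I_\lambda'(0)=0$ along the ray $(1+t)u_0$ and the ingenious test function $\psi_\varepsilon = u_0^+ + \varepsilon\varphi$ (plus a measure-theoretic estimate on the set $\Omega_\varepsilon=\{u_0^++\varepsilon\varphi\le 0\}$) to upgrade the inequality to equality for \emph{all} $\varphi\in X_0$. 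Your Ekeland-on-the-regularized-functional route would work in principle, but it requires a double limit (Ekeland index and $n\to\infty$) and a uniform treatment of the singular term across $n$; the paper's direct argument avoids this entirely and is genuinely cleaner.

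The more serious issue is the compactness ceiling. You set $c^*=\bigl(\tfrac1{2\theta}-\tfrac1{2^*_s}\bigr)S^{2^*_s\theta/(2^*_s-2\theta)}$, the pure Kirchhoff--critical threshold, and claim Palais--Smale sequences below $c^*$ are precompact. That is not what one can prove. In the contradiction argument (assume $\ell=\lim\|u_k-u\|_{2^*_s}>0$), the singular term enters with a negative sign through the H\"older--Young estimate and forces the lower bound $c\ge D_1 - D_2\lambda^{2\theta/(2\theta-1+\gamma)}$, with $D_1=c^*$ and $D_2>0$ explicit. So compactness is only known \emph{strictly below} $D_1 - D_2\lambda^{2\theta/(2\theta-1+\gamma)}$, a level that is strictly smaller than your $c^*$. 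Consequently, the key mountain-pass estimate has to beat this \emph{reduced} ceiling, not $D_1$. Moreover, your description of the singular term as a ``lower-order remainder'' to be ``absorbed by shrinking $\overline\lambda$'' is backwards: in the paper's Lemma on $\sup_{t\ge0}J_{n,\lambda}(t\psi_\varepsilon)$, the singular term is precisely what produces the \emph{gain} $-C_2\lambda\varepsilon^{\kappa}$ that pushes the max below $D_1 - D_2\lambda^{2\theta/(2\theta-1+\gamma)}$; one then must couple $\varepsilon$ to $\lambda$ (the paper takes $\varepsilon=\lambda^{\nu_1/q}$ with $q$ chosen so a certain exponent is negative) to make the comparison of powers of $\lambda$ come out. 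If you treat the singular contribution as a nuisance to be absorbed, the sub-threshold inequality against the corrected ceiling simply does not follow.

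Finally, a small remark on the limit $n\to\infty$ in the second solution: your ``fractional Hardy estimate'' is not what is needed here. Since the singularity is $u^{-\gamma}$ rather than $|x|^{-2s}$, what the paper uses is a uniform interior lower bound $v_n\ge c_{\widetilde\Omega}>0$ on compacta (from a comparison argument and the fractional strong maximum principle), which dominates $(v_n+1/n)^{-\gamma}\varphi$ for $\varphi\in C_0^\infty(\Omega)$ and lets one pass to the limit in the weak formulation; the density of $C^\infty_0(\Omega)$ in $X_0$ (valid because $\partial\Omega$ is continuous) then extends the identity to all test functions.

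Your diagnosis of the degenerate-Kirchhoff obstruction (nonlinear coupling via $\|\cdot\|^{2\theta}$, the role of $\theta<2^*_s/2$) is on target; the fix is to state the corrected, $\lambda$-dependent ceiling and to read the singular term as the driver of the sub-threshold gain rather than as noise.
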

The first solution of problem \eqref{P} is obtained by a suitable minimization argument, where we must pay attention to the nonlocal nature of the fractional Laplacian. Concerning the second solution, because of the presence of $u^{-\gamma}$, we can not apply the usual critical point theory to problem \eqref{P}. For this, we first study a perturbed problem obtained truncating the singular term $u^{-\gamma}$. Then, by approximation we get our second solution of \eqref{P}.

Finally, we observe that Theorem \ref{main} generalizes in several directions the first part of \cite[Theorem 4.1]{BIMP} and \cite[Theorem 1.1]{LLT}.

The paper is organized as follows. In Section~\ref{sec variational}, we discuss the variational formulation of problem \eqref{P} and we introduce the perturbed problem. In Section~\ref{sec existence}, we prove the existence of the first solution of \eqref{P} and we give a possible generalization of this existence result, at the end of the section.
In Section~\ref{sec mountain}, we prove the existence of a mountain pass solution for the perturbed problem.
In Section~\ref{sec finale}, we prove Theorem~\ref{main}.

\section{Variational setting}\label{sec variational}
{\em Throughout the paper we assume that $s\in(0,1)$, $N>2s$, $\theta\in(1,2^*_s/2)$, $\gamma\in(0,1)$ and $\Omega$ is an open bounded subset of $\mathbb R^N$ with $\partial\Omega$ continuous, without further mentioning. As a matter of notations, we denote with $\varphi^+=\max\{\varphi,0\}$ and $\varphi^-=\max\{-\varphi,0\}$ respectively the positive and negative part of a function $\varphi$.}

Problem~\eqref{P} has a variational structure and the natural space where finding solutions is the homogeneous fractional Sobolev space $H^s_0(\Omega)$. In order to study \eqref{P}, it is important to encode the ``boundary condition'' $u=0$ in
$\mathbb{R}^N\setminus\Omega$ in the weak formulation,
by considering also that the interaction between $\Omega$ and its complementary in $\mathbb{R}^N$ gives a positive contribution
in the so called {\em Gagliardo norm}, given as
\begin{equation}\label{norma}
\left\|u\right\|_{H^s(\mathbb R^N)}=\left\|u\right\|_{L^2(\mathbb R^N)}+\Big(\iint_{\mathbb R^{2N}} \frac{|u(x)-u(y)|^2}{\left|x-y\right|^{N+2s}}dxdy\Big)^{1/2}.
\end{equation}

The functional space that takes into account this boundary condition will be denoted by $X_0$ and it is defined as
$$
X_0=\big\{u\in H^s(\mathbb R^N):\,\,u=0\mbox{ a.e. in } \mathbb R^N\setminus \Omega\big\}.
$$
We refer to \cite{SV} for a general definition of $X_0$ and its properties.
We also would like to point out that, when $\partial\Omega$ is continuous, by \cite[Theorem~6]{FSV} the space $C^\infty_0(\Omega)$ is dense in $X_0$, with respect to the norm \eqref{norma}. This last point will be used to overcome the singularity in problem \eqref{P}.

In $X_0$ we can consider the following norm
$$
\left\|u\right\|_{X_0}=\Big(\iint_{\mathbb R^{2N}} \frac{|u(x)-u(y)|^2}{\left|x-y\right|^{N+2s}}dxdy\Big)^{1/2},
$$
which is equivalent to the usual one defined in \eqref{norma} (see \cite[Lemma~6]{SV}).
We also recall that $(X_0,\left\|\,\cdot\,\right\|_{X_0})$ is a Hilbert space, with the scalar product defined as
$$
\left\langle u,v\right\rangle_{X_0}=\iint_{\mathbb R^{2N}} \frac{(u(x)-u(y))(v(x)-v(y))}{\left|x-y\right|^{N+2s}}dxdy.
$$
{\it From now on, in order to simplify the notation, we will denote $\|\cdot\|_{X_0}$ and $\left\langle \cdot,\cdot\right\rangle_{X_0}$ by $\|\cdot\|$ and $\left\langle \cdot,\cdot\right\rangle$ respectively, and $\|\cdot\|_{L^q(\Omega)}$ by $\|\cdot\|_q$ for any $q\in[1,\infty]$.}

In order to present the weak formulation of \eqref{P} and taking into account that we are looking for positive solutions, we will consider the following Kirchhoff problem
\begin{equation}\label{P+}
\left\{\begin{array}{ll}
\left(\displaystyle\iint_{\mathbb{R}^{2N}}\frac{|u(x)-u(y)|^2}{|x-y|^{N+2s}}dxdy\right)^{\theta-1} (-\Delta)^s u = \displaystyle\frac{\lambda}{(u^+)^\gamma}+(u^+)^{2^*_s-1}&\quad\mbox{in } \Omega,\\
u=0&\quad\mbox{in } \mathbb{R}^N\setminus\Omega.
\end{array}
\right.
\end{equation}
We say that $u\in X_0$ is a (weak) solution of problem \eqref{P+}, if $u$ satisfies
\begin{equation}\label{weak}
\|u\|^{2(\theta-1)}\langle u,\varphi\rangle
 = \lambda \int_\Omega \frac{\varphi}{(u^+)^\gamma}dx
 +\int_\Omega(u^+)^{2^*_s-1}\varphi,
\end{equation}
for any $\varphi\in X_0$.
Problem \eqref{P+} has a variational structure and $J_\lambda:X_0\to\mathbb R$, defined by
\begin{align*}
J_\lambda(u)=\frac{1}{2\theta}\|u\|^{2\theta}-\frac{\lambda}{1-\gamma}\int_\Omega (u^+)^{1-\gamma}dx -\frac{1}{2^*_s}\|u^+\|^{2^*_s}_{2^*_s},
\end{align*}
is the underlying functional associated to \eqref{P+}.
Because of the presence of a singular term in \eqref{P+}, functional $J_\lambda$ is not differentiable on $X_0$. Therefore, we can not apply directly the usual critical point theory to $J_\lambda$, in order to solve problem \eqref{P+}. However, it is possible to find a first solution of \eqref{P+} by using a local minimization argument. In order to get the second solution of \eqref{P+}, we have to study an associated approximating problem. That is, for any $n\in\mathbb N$, we consider the following perturbed problem
\begin{equation}\label{Pn}
\left\{\begin{array}{ll}
\left(\displaystyle\iint_{\mathbb{R}^{2N}}\frac{|u(x)-u(y)|^2}{|x-y|^{N+2s}}dxdy\right)^{\theta-1} (-\Delta)^s u = \displaystyle\frac{\lambda}{(u^++1/n)^\gamma}+(u^+)^{2^*_s-1}\quad\mbox{in } \Omega,\\
u=0\quad\mbox{in } \mathbb{R}^N\setminus\Omega.
\end{array}
\right.
\end{equation}
For this, we say that $u\in X_0$ is a (weak) solution of problem \eqref{Pn}, if $u$ satisfies
\begin{equation}\label{weak2}
\|u\|^{2(\theta-1)}\langle u,\varphi\rangle
 = \lambda \int_\Omega \frac{\varphi}{(u^++1/n)^\gamma}dx
 +\int_\Omega(u^+)^{2^*_s-1}\varphi,
\end{equation}
for any $\varphi\in X_0$.
In this case, solutions of \eqref{Pn} correspond to the critical points of functional $J_{n,\lambda}:X_0\to\mathbb R$, set as
\begin{equation}\label{jn}
J_{n,\lambda}(u)=\frac{1}{2\theta}\|u\|^{2\theta}-\frac{\lambda}{1-\gamma}\int_\Omega[(u^++1/n)^{1-\gamma}-(1/n)^{1-\gamma}]dx -\frac{1}{2^*_s}\|u^+\|^{2^*_s}_{2^*_s}.
\end{equation}
It is immediate to see that $J_{n,\lambda}$ is of class $C^1(X_0)$.

We conclude this section recalling the best constant of the fractional Sobolev embedding, which will be very useful to study the compactness property of functional $J_{n,\lambda}$. That is, we consider
\begin{equation}\label{S}
S=\inf_{\substack{v\in H^s(\mathbb R^N)\\
v\not\equiv0}}\displaystyle\frac{\displaystyle\iint_{\mathbb{R}^{2N}}\frac{|v(x)-v(y)|^2}{|x-y|^{N+2s}}dxdy}{\left(\int_{\mathbb R^N}|v(x)|^{2^*_s}dx\right)^{2/2^*_s}},
\end{equation}
which is well defined and strictly positive, as shown in \cite[Theorem 1.1]{CT}. 

\section{A first solution for problem \eqref{P}}\label{sec existence} 
In this section we prove the existence of a solution for problem \eqref{P} by a local minimization argument. For this, we first study the geometry of functional $J_\lambda$.

\begin{lemma}\label{mp} There exist numbers $\rho\in(0,1]$, $\lambda_0=\lambda_{0}(\rho)>0$ and $\alpha=\alpha(\rho)>0$ such that $J_\lambda(u)\ge\alpha$ for any $u\in X_0$, with $\|u\|=\rho$, and for any $\lambda\in(0,\lambda_0]$.

Furthermore, set 
$$m_\lambda=\inf\left\{J_\lambda(u):\,\, u\in\overline{B}_\rho\right\},$$
where $\overline{B}_\rho=\left\{u\in X_0:\,\,\|u\|\leq\rho\right\}$. Then, $m_\lambda<0$ for any $\lambda\in(0,\lambda_0]$.
\end{lemma}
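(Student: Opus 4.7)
The plan is to exploit the interplay of the three exponents $2\theta$, $1-\gamma$ and $2^*_s$ appearing in $J_\lambda$. Since $\theta>1$ and $\gamma\in(0,1)$, one has
$$1-\gamma<2<2\theta<2^*_s,$$
so the Kirchhoff term $\|u\|^{2\theta}/(2\theta)$ dominates both the singular term and the critical term at small norms, while at very small norms the $t^{1-\gamma}$ behaviour of the singular part beats the $t^{2\theta}$ behaviour of the Kirchhoff part along any ray. This gives the two claims respectively.

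For the lower bound on the sphere $\|u\|=\rho$, I would first bound the critical term by the Sobolev inequality \eqref{S}, obtaining
$$\|u^+\|_{2^*_s}^{2^*_s}\le \|u\|_{2^*_s}^{2^*_s}\le S^{-2^*_s/2}\|u\|^{2^*_s}.$$
For the singular term, since $1-\gamma\in(0,1)$ and $\Omega$ is bounded, H\"older's inequality with exponents $2^*_s/(1-\gamma)$ and its conjugate, followed again by \eqref{S}, yields a constant $C=C(|\Omega|,S,\gamma)>0$ with
$$\int_\Omega (u^+)^{1-\gamma}\,dx\le |\Omega|^{1-(1-\gamma)/2^*_s}\,\|u^+\|_{2^*_s}^{1-\gamma}\le C\|u\|^{1-\gamma}.$$
Combining these, for $\|u\|=\rho\le 1$ I would get
$$J_\lambda(u)\ge \rho^{1-\gamma}\left(\frac{1}{2\theta}\rho^{2\theta-1+\gamma}-\frac{1}{2^*_s S^{2^*_s/2}}\rho^{2^*_s-1+\gamma}-\frac{C\lambda}{1-\gamma}\right).$$
Because $2^*_s-1+\gamma>2\theta-1+\gamma>0$, I can fix $\rho\in(0,1]$ small enough so that the critical contribution is bounded by, say, half the Kirchhoff contribution; the resulting bracket is then a strictly positive constant minus $C\lambda/(1-\gamma)$. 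Choosing $\lambda_0=\lambda_0(\rho)>0$ small makes this bracket at least half its $\lambda=0$ value, yielding $J_\lambda(u)\ge\alpha>0$ for every $\lambda\in(0,\lambda_0]$ and every $u$ on the sphere.

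For the negativity of $m_\lambda$, I would fix once and for all a function $u_0\in X_0$ with $u_0\ge0$ and $u_0\not\equiv 0$ (for instance any nonnegative nontrivial element of $C^\infty_0(\Omega)$, whose density in $X_0$ is recalled just after the definition of $X_0$). Then, for $t>0$,
$$J_\lambda(tu_0)=\frac{t^{2\theta}}{2\theta}\|u_0\|^{2\theta}-\frac{\lambda t^{1-\gamma}}{1-\gamma}\int_\Omega u_0^{1-\gamma}\,dx-\frac{t^{2^*_s}}{2^*_s}\|u_0\|_{2^*_s}^{2^*_s}.$$
The integral $\int_\Omega u_0^{1-\gamma}\,dx$ is finite (since $1-\gamma<1$ and $\Omega$ is bounded) and strictly positive. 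Because $1-\gamma$ is strictly smaller than both $2\theta$ and $2^*_s$, dividing by $t^{1-\gamma}$ and letting $t\to 0^+$ shows $J_\lambda(tu_0)/t^{1-\gamma}\to -\lambda\int_\Omega u_0^{1-\gamma}\,dx/(1-\gamma)<0$. Picking $t>0$ sufficiently small so that in addition $\|tu_0\|\le \rho$ gives $m_\lambda\le J_\lambda(tu_0)<0$, as required.

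The main obstacle is not in any one estimate but in coordinating the choices: $\rho$ must be fixed first so that the critical term is absorbed by the Kirchhoff term uniformly in $\lambda$; only then can $\lambda_0=\lambda_0(\rho)$ be chosen so that the singular term does not destroy positivity on the sphere; finally, for the second part, the same $\rho$ must accommodate the rescaling $tu_0$, which is easy since $t$ can be taken arbitrarily small. The bookkeeping of orders of $\rho$ and $\lambda$, rather than any analytical difficulty, is the delicate point.
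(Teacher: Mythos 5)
Your plan is correct and follows essentially the same route as the paper: bound the singular term via H\"older plus the Sobolev constant $S$, bound the critical term via $S$, factor out $\|u\|^{1-\gamma}$, choose $\rho\in(0,1]$ small enough that the remaining bracket is positive, then shrink $\lambda_0$; and for $m_\lambda<0$, scale a fixed nonnegative function and let $t\to0^+$, using $1-\gamma<2\theta<2^*_s$. The only cosmetic difference is that the paper picks $\rho$ as the maximizer of the explicit auxiliary function $\eta(t)=\frac{1}{2\theta}t^{2\theta-1+\gamma}-\frac{S^{-2^*_s/2}}{2^*_s}t^{2^*_s-1+\gamma}$ and sets $\lambda_0$ in terms of $\eta(\rho)$, rather than absorbing half of one term into the other.
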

\begin{proof}
Let $\lambda>0$. From the H\"older inequality and \eqref{S}, for any $u\in X_0$ we have 
\begin{equation}\label{holder}
\int_\Omega(u^+)^{1-\gamma}dx\leq|\Omega|^{\frac{2^*_s-1+\gamma}{2^*_s}}\|u\|^{1-\gamma}_{2^*_s}\leq|\Omega|^{\frac{2^*_s-1+\gamma}{2^*_s}}S^{-\frac{1-\gamma}{2}}\|u\|^{1-\gamma}.
\end{equation}
Hence, by using again \eqref{S} and \eqref{holder} we get
$$
J_\lambda(u)\ge\frac{1}{2\theta}\|u\|^{2\theta}-\frac{S^{-\frac{2^*_s}{2}}}{2^*_s}\|u\|^{2^*_s}-\frac{\lambda}{1-\gamma}|\Omega|^{\frac{2^*_s-1+\gamma}{2^*_s}}S^{-\frac{1-\gamma}{2}}\|u\|^{1-\gamma}.
$$
Since $1-\gamma<1<2\theta<2^*_s$, the function
$$\eta(t)=\frac{1}{2\theta}t^{2\theta-1+\gamma}-\frac{S^{-\frac{2^*_s}{2}}}{2^*_s}t^{2^*_s-1+\gamma},\quad t\in[0,1]$$
admits a maximum at some $\rho\in(0,1]$ small enough, that is $\displaystyle\max_{t\in[0,1]}\eta(t)=\eta(\rho)>0$.
Thus, let
$$\lambda_0=\frac{(1-\gamma)S^{\frac{1-\gamma}{2}}}{2|\Omega|^{\frac{2^*_s-1+\gamma}{2^*_s}}}\eta(\rho),
$$
then for any $u\in X_0$ with $\|u\|=\rho\leq1$ and for any $\lambda\leq\lambda_0$, we get $J_\lambda(u)\ge\rho^{1-\gamma}\eta(\rho)/2=\alpha>0$. 

Furthermore, fixed $v\in X_0$ with $v^+\not\equiv0$, for $t\in(0,1)$ sufficiently small
$$
J_\lambda(tv)=\frac{t^{2\theta}}{2\theta}\|v\|^{2\theta}-t^{1-\gamma}\frac{\lambda}{1-\gamma}\int_\Omega (v^+)^{1-\gamma}dx -\frac{t^{2^*_s}}{2^*_s}\|v^+\|^{2^*_s}_{2^*_s}<0,
$$
being $1-\gamma<1<2\theta<2^*_s$. This concludes the proof.
\end{proof}

We are now ready to prove the existence of the first solution of \eqref{P}.

\begin{theorem}\label{prima} Let $\lambda_0$ be given as in Lemma \ref{mp}. Then, for any $\lambda\in(0,\lambda_0]$ problem \eqref{P} has a solution $u_0\in X_0$, with $J_\lambda(u_0)<0$.
\end{theorem}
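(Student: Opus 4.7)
The strategy is to realize $u_0$ as a constrained minimizer of $J_\lambda$ on $\bar{B}_\rho$. By Lemma~\ref{mp}, $m_\lambda<0<\alpha\le\inf_{\partial B_\rho}J_\lambda$, so any minimizer must lie in the interior of $\bar{B}_\rho$. First I would apply Ekeland's variational principle on the complete metric space $(\bar{B}_\rho,\|\cdot\|)$ to obtain a sequence $\{u_k\}\subset\bar{B}_\rho$ with $J_\lambda(u_k)\to m_\lambda$ and
\[
J_\lambda(w)\ge J_\lambda(u_k)-\tfrac{1}{k}\|w-u_k\|\qquad\text{for all }w\in\bar{B}_\rho.
\]
Since $\||u|\|\le\|u\|$ and the two negative terms of $J_\lambda$ involve $u^+\le|u|$, one has $J_\lambda(|u|)\le J_\lambda(u)$, so I may replace $u_k$ by $|u_k|$ and assume $u_k\ge 0$. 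Because $m_\lambda<\alpha$, eventually $\|u_k\|<\rho$. Extracting a subsequence, $u_k\rightharpoonup u_0$ in $X_0$, $u_k\to u_0$ a.e.\ in $\Omega$ and strongly in $L^q(\Omega)$ for every $q\in[1,2^*_s)$, so $u_0\ge 0$ and $\|u_0\|\le\rho$.

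Next I would convert the Ekeland property into an Euler--Lagrange inequality. Given $\varphi\in X_0$ with $\varphi\ge 0$, for $t>0$ small and $k$ large the point $w=u_k+t\varphi$ lies in $\bar{B}_\rho$, and the Ekeland inequality divided by $t$ reads
\[
\frac{J_\lambda(u_k+t\varphi)-J_\lambda(u_k)}{t}\ge-\frac{\|\varphi\|}{k}.
\]
The norm and the critical power admit usual right derivatives at $t=0^+$; the difference quotient $[(u_k+t\varphi)^{1-\gamma}-u_k^{1-\gamma}]/t$ is nondecreasing as $t\downarrow 0$ by concavity of $s\mapsto s^{1-\gamma}$, so the monotone convergence theorem yields $(1-\gamma)\int_\Omega u_k^{-\gamma}\varphi\,dx\in[0,+\infty]$ as its limit. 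Finiteness of the Ekeland lower bound forces $u_k^{-\gamma}\varphi\in L^1(\Omega)$ together with
\[
\|u_k\|^{2(\theta-1)}\langle u_k,\varphi\rangle-\int_\Omega u_k^{2^*_s-1}\varphi\,dx+\frac{\|\varphi\|}{k}\ge\lambda\int_\Omega\frac{\varphi}{u_k^\gamma}\,dx.
\]

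The main obstacle will be the passage to $k\to\infty$ in the preceding inequality, since the critical term is not weakly continuous. I would establish strong convergence $u_k\to u_0$ in $X_0$ by exploiting the low energy level $m_\lambda<0$: a Brezis--Lieb decomposition applied to both the $L^{2^*_s}$ norm and to the Hilbert norm forces the defect measure to vanish below the Kirchhoff concentration threshold associated with \eqref{S}, which is guaranteed by shrinking $\lambda_0$ if needed so that $m_\lambda$ stays safely below that threshold. Once strong convergence in $X_0$ holds, convergence in $L^{2^*_s}(\Omega)$ follows from the Sobolev embedding, and Fatou's lemma applied to the singular term transfers the inequality to $u_0$ for every $\varphi\ge 0$ in $X_0$. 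Choosing $\varphi$ supported in $\{u_0=0\}$ and using $u_0^{-\gamma}\varphi\in L^1(\Omega)$ forces $u_0>0$ a.e.\ in $\Omega$, and the standard perturbation trick $\varphi\mapsto(u_0+\varepsilon\psi)^+$ with $\psi\in X_0$ arbitrary, followed by $\varepsilon\to 0^+$, upgrades the inequality to the weak identity \eqref{weak}. Therefore $u_0$ solves \eqref{P+}, and by positivity it solves \eqref{P} with $J_\lambda(u_0)=m_\lambda<0$.
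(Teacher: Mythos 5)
Your overall strategy (minimize on $\overline{B}_\rho$, obtain an Euler--Lagrange inequality for nonnegative test functions, then upgrade it to the full identity via the perturbation $\varphi\mapsto(u_0+\varepsilon\psi)^+$) matches the paper's, and substituting Ekeland's principle for the paper's direct Brezis--Lieb argument is a legitimate variation. However, there is a genuine gap at the very last step. The perturbation trick does \emph{not}, by itself, upgrade the inequality to \eqref{weak}. Expanding $\psi_\varepsilon=(u_0+\varepsilon\psi)^+$ and inserting it into the one-sided inequality produces an expansion of the form
$$
0\le\Big[\|u_0\|^{2\theta}-\lambda\int_\Omega u_0^{1-\gamma}\,dx-\int_\Omega u_0^{2^*_s}\,dx\Big]+\varepsilon\Big[\|u_0\|^{2(\theta-1)}\langle u_0,\psi\rangle-\lambda\int_\Omega u_0^{-\gamma}\psi\,dx-\int_\Omega u_0^{2^*_s-1}\psi\,dx\Big]+o(\varepsilon),
$$
and dividing by $\varepsilon$ only gives information when the zeroth-order bracket vanishes. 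You never establish the scaling identity $\|u_0\|^{2\theta}-\lambda\int_\Omega u_0^{1-\gamma}\,dx-\int_\Omega u_0^{2^*_s}\,dx=0$, which in the paper comes from the interior minimality of $u_0$ (differentiating $t\mapsto J_\lambda((1+t)u_0)$ at $t=0$, cf.\ \eqref{2.14}). In your Ekeland framework the analogous step is to test the variational inequality with $w=(1\pm t)u_k$ and let $t\downarrow 0$ to get $\big|\|u_k\|^{2\theta}-\lambda\int u_k^{1-\gamma}-\|u_k\|_{2^*_s}^{2^*_s}\big|\le\|u_k\|/k$; this must be stated and passed to the limit. Without it, testing the inequality with $\psi=u_0$ gives the bracket $\ge 0$ but nothing forces it to be $\le 0$, and the argument stalls.

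Two further issues are worth flagging. First, replacing $u_k$ by $|u_k|$ destroys the Ekeland variational inequality, whose center is the original $u_k$; if you want nonnegative approximants you should instead apply Ekeland on the closed cone $\{u\in\overline{B}_\rho:u\ge0\}$ (this suffices because you only test with $w=u_k+t\varphi$, $\varphi\ge0$, and $w=(1\pm t)u_k$). Second, the strong-convergence step as written is too loose: invoking a ``Kirchhoff concentration threshold'' and ``shrinking $\lambda_0$ if needed'' both obscures the mechanism and weakens the theorem unnecessarily. Since $m_\lambda<0$, no shrinking is required; the clean argument is the one the paper uses: the Brezis--Lieb decomposition, the superadditivity $(a+b)^\theta\ge a^\theta+b^\theta$, and the geometric fact from Lemma~\ref{mp} that $\tfrac{1}{2\theta}\|w\|^{2\theta}-\tfrac{1}{2^*_s}\|w^+\|_{2^*_s}^{2^*_s}>0$ for $0<\|w\|\le\rho$ together force $\|u_k-u_0\|\to0$ (or, as the paper phrases it, force $J_\lambda(u_0)=m_\lambda$ with no need for strong convergence at all). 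You should make that quantitative rather than appeal to a threshold that $m_\lambda$ is already far beneath.
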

\begin{proof}
Fix $\lambda\in(0,\lambda_0]$ and let $\rho$ be as given in Lemma \ref{mp}. We first prove there exists $u_0\in \overline{B}_\rho$ such that $J_\lambda(u_0)=m_\lambda<0$.
Let $\{u_k\}_k\subset\overline{B}_\rho$ be a minimizing sequence for $m_\lambda$, that is such that 
\begin{equation}\label{minimax}
\lim_{k\to\infty}J_\lambda(u_k)=m_\lambda.
\end{equation}
Since $\{u_k\}_k$ is bounded in $X_0$, by applying \cite[Lemma~8]{SV} and \cite[Theorem 4.9]{B}, there exist a subsequence, still
denoted by $\{u_k\}_k$, and a function $u_0\in\overline{B}_\rho$ such that, as $k\to\infty$ we have
\begin{equation}\label{convergences}
\begin{array}{ll}
u_k\rightharpoonup u_0\text{ in }X_0,\quad &u_k\rightharpoonup u_0\text{ in }L^{2^*_s}(\Omega), \\
u_k\to u_0\text{ in }L^p(\Omega)\mbox{ for any }p\in[1,2^*_s),\quad &u_k\to u_0\text{ a.e.  in }\Omega.
\end{array}
\end{equation}
Since $\gamma\in(0,1)$, by the H\"older inequality, for any $k\in\mathbb N$ we have
$$\left|\int_\Omega (u^+_k)^{1-\gamma}dx-\int_\Omega (u^+_0)^{1-\gamma}dx\right|\leq\int_\Omega\left|u^+_k-u^+_0\right|^{1-\gamma}dx
\leq\|u^+_k-u^+_0\|^{1-\gamma}_2|\Omega|^{\frac{1+\gamma}{2}},
$$
which yields, by \eqref{convergences}
\begin{equation}\label{gamma}
\lim_{k\to\infty}\int_\Omega (u^+_k)^{1-\gamma}dx=\int_\Omega (u^+_0)^{1-\gamma}dx.
\end{equation}
Let $w_k=u_k-u_0$, by \cite[Theorem 2]{BL} it holds true that
\begin{equation}\label{bl}
\|u_k\|^2=\|w_k\|^2+\|u_0\|^2+o(1),\quad\|u_k\|^{2^*_s}_{2^*_s}=\|w_k\|^{2^*_s}_{2^*_s}+\|u_0\|^{2^*_s}_{2^*_s}+o(1)
\end{equation}
as $k\to\infty$.
Since $\{u_k\}_k\subset\overline{B}_\rho$, by \eqref{bl} for $k$ sufficiently large, we have $w_k\in\overline{B}_\rho$.
Lemma \ref{mp} implies that for any $u\in X_0$, with $\|u\|=\rho$, we get
$$\frac{1}{2\theta}\|u\|^{2\theta}-\frac{1}{2^*_s}\|u^+\|^{2^*_s}_{2^*_s}\geq\alpha>0,
$$
and from this, being $\rho\leq1$, for $k$ sufficiently large we have
\begin{equation}\label{2.10}
\frac{1}{2\theta}\|w_k\|^{2\theta}-\frac{1}{2^*_s}\|w^+_k\|^{2^*_s}_{2^*_s}>0.
\end{equation}
Thus, by \eqref{minimax}, \eqref{gamma}--\eqref{2.10} and considering $\theta\geq1$, it follows that as $k\to\infty$
\begin{align*}
m_\lambda&=J_\lambda(u_k)+o(1)\\
&=\frac{1}{2\theta}\left(\|w_k\|^2+\|u_0\|^2\right)^{\theta}-\frac{\lambda}{1-\gamma}\int_\Omega (u^+_0)^{1-\gamma}dx -\frac{1}{2^*_s}\left(\|w^+_k\|^{2^*_s}_{2^*_s}+\|u^+_0\|^{2^*_s}_{2^*_s}\right)+o(1)\\
&\geq J_\lambda(u_0)+\frac{1}{2\theta}\|w_k\|^{2\theta}-\frac{1}{2^*_s}\|w^+_k\|^{2^*_s}_{2^*_s}+o(1)\geq J_\lambda(u_0)+o(1)\geq m_\lambda,
\end{align*}
being $u_0\in\overline{B}_\rho$.
Hence, $u_0$ is a local minimizer for $J_\lambda$, with $J_\lambda(u_0)=m_\lambda<0$ which implies that $u_0$ is nontrivial.

Now, we prove that $u_0$ is a positive solution of \eqref{P+}. For any $\psi\in X_0$, with $\psi\geq0$ a.e. in $\mathbb R^N$, let us consider a $t>0$ sufficiently small so that $u_0+t\psi\in\overline{B}_\rho$. Since $u_0$ is a local minimizer for $J_\lambda$, we have
\begin{align*}
0&\leq J_\lambda(u_0+t\psi)-J_\lambda(u_0)\\
&=\frac{1}{2\theta}\left(\|u_0+t\psi\|^{2\theta}-\|u_0\|^{2\theta}\right)-\frac{\lambda}{1-\gamma}\int_\Omega \left[((u_0+t\psi)^+)^{1-\gamma}-(u^+_0)^{1-\gamma}\right]dx\\
&\quad-\frac{1}{2^*_s}\left(\|u_0+t\psi\|^{2^*_s}_{2^*_s}-\|u^+_0\|^{2^*_s}_{2^*_s}\right).
\end{align*}
From this, dividing by $t>0$ and passing to the limit as $t\to0^+$, it follows that
\begin{equation}\label{2.12}
\liminf_{t\to0^+}\frac{\lambda}{1-\gamma}\int_\Omega\frac{((u_0+t\psi)^+)^{1-\gamma}-(u^+_0)^{1-\gamma}}{t}dx\leq\|u_0\|^{2(\theta-1)}\langle u_0,\psi\rangle-\int_\Omega(u^+_0)^{2^*_s-1}\psi dx.
\end{equation}
We observe that
$$
\frac{1}{1-\gamma}\!\cdot\!\frac{((u_0+t\psi)^+)^{1-\gamma}-(u^+_0)^{1-\gamma}}{t}=((u_0+\xi t\psi)^+)^{-\gamma}\psi\quad\mbox{ a.e. in }\Omega,
$$
with $\xi\in(0,1)$ and $((u_0+\xi t\psi)^+)^{-\gamma}\to(u^+_0)^{-\gamma}\psi$ a.e. in $\Omega$, as $t\to0^+$.
Thus, by the Fatou lemma, we obtain
\begin{equation}\label{2.12b}
\lambda\int_\Omega(u^+_0)^{-\gamma}\psi dx\leq\liminf_{t\to0^+}\frac{\lambda}{1-\gamma}\int_\Omega\frac{((u_0+t\psi)^+)^{1-\gamma}-(u^+_0)^{1-\gamma}}{t}dx.
\end{equation}
Therefore, combining \eqref{2.12} and \eqref{2.12b} we get
\begin{equation}\label{2.13}
\|u_0\|^{2(\theta-1)}\langle u_0,\psi\rangle-\lambda\int_\Omega(u^+_0)^{-\gamma}\psi dx-\int_\Omega(u^+_0)^{2^*_s-1}\psi dx\geq0,
\end{equation}
for any $\psi\in X_0$, with $\psi\geq0$ a.e. in $\mathbb R^N$.

Since $J_\lambda(u_0)<0$ and by Lemma \ref{mp}, we have $u_0\in B_\rho$. Hence, there exists $\delta\in(0,1)$ such that $(1+t)u_0\in\overline{B}_\rho$ for any $t\in[-\delta,\delta]$. Let us define $I_\lambda(t)=J_\lambda((1+t)u_0)$. Since $u_0$ is a local minimizer for $J_\lambda$ in $\overline{B}_\rho$, functional $I_\lambda$ has a minimum at $t=0$, that is
\begin{equation}\label{2.14}
I'_\lambda(0)=\|u_0\|^{2\theta}-\lambda\int_\Omega(u^+_0)^{1-\gamma}dx-\int_\Omega(u^+_0)^{2^*_s}dx=0.
\end{equation}
For any $\varphi\in X_0$ and any $\varepsilon>0$, let us define $\psi_\varepsilon=u^+_0+\varepsilon\varphi$. Then, by \eqref{2.13} we have
\begin{equation}\label{2.15a}
\begin{aligned}
0&\leq\|u_0\|^{2(\theta-1)}\langle u_0,\psi^+_\varepsilon\rangle-\lambda\int_\Omega(u^+_0)^{-\gamma}\psi^+_\varepsilon dx-\int_\Omega(u^+_0)^{2^*_s-1}\psi^+_\varepsilon dx\\
&=\|u_0\|^{2(\theta-1)}\langle u_0,\psi_\varepsilon+\psi^-_\varepsilon\rangle-\lambda\int_\Omega(u^+_0)^{-\gamma}(\psi_\varepsilon+\psi^-_\varepsilon) dx-\int_\Omega(u^+_0)^{2^*_s-1}(\psi_\varepsilon+\psi^-_\varepsilon) dx.
\end{aligned}
\end{equation}
We observe that, for a.e. $x$, $y\in\mathbb{R}^N$, we obtain
\begin{equation}\label{eur}
\begin{aligned}
(u_0(x)&-u_0(y))(u^-_0(x)-u^-_0(y))\\
&=-u^+_0(x)u^-_0(y)-u^-_0(x)u^+_0(y)-\big[u^-_0(x)-u^-_0(y)\big]^2\\
&\leq-\left|u^-_0(x)-u^-_0(y)\right|^2,
\end{aligned}
\end{equation}
from which we immediately get
$$
(u_0(x)-u_0(y))(u^+_0(x)-u^+_0(y))\leq\left|u_0(x)-u_0(y)\right|^2.
$$
From the last inequality, it follows that
\begin{equation}\label{2.15b}
\begin{aligned}
\langle u_0,\psi_\varepsilon+\psi^-_\varepsilon\rangle&=\iint_{\mathbb R^{2N}}\frac{(u_0(x)-u_0(y))(\psi_\varepsilon(x)+\psi^-_\varepsilon(x)-\psi_\varepsilon(y)-\psi^-_\varepsilon(y))}{|x-y|^{N+2s}}dxdy\\
&\leq\iint_{\mathbb R^{2N}}\frac{|u_0(x)-u_0(y)|^2}{|x-y|^{N+2s}}dxdy+\varepsilon\iint_{\mathbb R^{2N}}\frac{(u_0(x)-u_0(y))(\varphi(x)-\varphi(y))}{|x-y|^{N+2s}}dxdy\\
&\quad+\iint_{\mathbb R^{2N}}\frac{(u_0(x)-u_0(y))(\psi^-_\varepsilon(x)-\psi^-_\varepsilon(y))}{|x-y|^{N+2s}}dxdy.
\end{aligned}
\end{equation}
Hence, denoting with $\Omega_\varepsilon=\left\{x\in\mathbb R^N:\,\,u^+_0(x)+\varepsilon\varphi(x)\leq0\right\}$ and by combining \eqref{2.15a} with \eqref{2.15b}, we get
\begin{equation}\label{2.15c}
\begin{aligned}
0&\leq\|u_0\|^{2\theta}+\varepsilon\|u_0\|^{2(\theta-1)}\langle u_0,\varphi\rangle+\|u_0\|^{2(\theta-1)}\langle u_0,\psi^-_\varepsilon\rangle\\
&\quad-\lambda\int_\Omega(u^+_0)^{-\gamma}(u^+_0+\varepsilon\varphi) dx-\int_\Omega(u^+_0)^{2^*_s-1}(u^+_0+\varepsilon\varphi) dx\\
&\quad+\lambda\int_{\Omega_\varepsilon}(u^+_0)^{-\gamma}(u^+_0+\varepsilon\varphi)dx+\int_{\Omega_\varepsilon}(u^+_0)^{2^*_s-1}(u^+_0+\varepsilon\varphi) dx\\
&\leq\|u_0\|^{2\theta}-\lambda\int_\Omega(u^+_0)^{1-\gamma}dx-\int_\Omega(u^+_0)^{2^*_s}dx+\|u_0\|^{2(\theta-1)}\langle u_0,\psi^-_\varepsilon\rangle\\
&\quad+\varepsilon\left[\|u_0\|^{2(\theta-1)}\langle u_0,\varphi\rangle-\lambda\int_\Omega(u^+_0)^{-\gamma}\varphi dx-\int_\Omega(u^+_0)^{2^*_s-1}\varphi dx\right]\\
&=\|u_0\|^{2(\theta-1)}\langle u_0,\psi^-_\varepsilon\rangle+\varepsilon\left[\|u_0\|^{2(\theta-1)}\langle u_0,\varphi\rangle-\lambda\int_\Omega(u^+_0)^{-\gamma}\varphi dx-\int_\Omega(u^+_0)^{2^*_s-1}\varphi dx\right],
\end{aligned}
\end{equation}
where last equality follows from \eqref{2.14}.
Arguing similarly to \eqref{eur}, for a.e. $x$, $y\in\mathbb{R}^N$ we have
\begin{equation}\label{eur2}
(u_0(x)-u_0(y))(u^+_0(x)-u^+_0(y))\geq\left|u^+_0(x)-u^+_0(y)\right|^2.
\end{equation}
Thus, denoting with
$$
\mathcal U_\varepsilon(x,y)=\frac{(u_0(x)-u_0(y))(\psi^-_\varepsilon(x)-\psi^-_\varepsilon(y))}{|x-y|^{N+2s}},
$$
by the symmetry of the fractional kernel and \eqref{eur2}, we get
\begin{equation}\label{2.15d}
\begin{aligned}
\langle u_0,\psi^-_\varepsilon\rangle&=\iint_{\Omega_\varepsilon\times\Omega_\varepsilon}\mathcal U_\varepsilon(x,y)dxdy+2\iint_{\Omega_\varepsilon\times(\mathbb R^N\setminus\Omega_\varepsilon)}\mathcal U_\varepsilon(x,y)dxdy\\
&\leq-\varepsilon\left(\iint_{\Omega_\varepsilon\times\Omega_\varepsilon}\mathcal U(x,y)dxdy+2\iint_{\Omega_\varepsilon\times(\mathbb R^N\setminus\Omega_\varepsilon)}\mathcal U(x,y)dxdy\right)\\
&\leq2\varepsilon\iint_{\Omega_\varepsilon\times\mathbb R^N}\left|\mathcal U(x,y)\right|dxdy,
\end{aligned}
\end{equation}
where we set
$$
\mathcal U(x,y)=\frac{(u_0(x)-u_0(y))(\varphi(x)-\varphi(y))}{|x-y|^{N+2s}}.
$$
Clearly $\mathcal U\in L^1(\mathbb R^{2N})$, so that for any $\sigma>0$ there exists $R_\sigma$ sufficiently large such that
$$\iint_{(\mbox{\small supp }\varphi)\times(\mathbb R^N\setminus B_{R_\sigma})}\left|\mathcal U(x,y)\right|dxdy<\frac{\sigma}{2}.
$$
Also, by definition of $\Omega_\varepsilon$, we have $\Omega_\varepsilon\subset\mbox{\small supp }\varphi$ and $|\Omega_\varepsilon\times B_{R_\sigma}|\to0$ as $\varepsilon\to0^+$. Thus, since $\mathcal U\in L^1(\mathbb R^{2N})$, there exists $\delta_\sigma>0$ and $\varepsilon_\sigma>0$ such that for any $\varepsilon\in(0,\varepsilon_\sigma]$
$$|\Omega_\varepsilon\times B_{R_\sigma}|<\delta_\sigma\quad\mbox{and}\quad\iint_{\Omega_\varepsilon\times B_{R_\sigma}}\left|\mathcal U(x,y)\right|dxdy<\frac{\sigma}{2}.
$$
Therefore, for any $\varepsilon\in(0,\varepsilon_\sigma]$
$$\iint_{\Omega_\varepsilon\times\mathbb R^N}\left|\mathcal U(x,y)\right|dxdy<\sigma,
$$
from which we get
\begin{equation}\label{2.15e}
\lim_{\varepsilon\to0^+}\iint_{\Omega_\varepsilon\times\mathbb R^N}\left|\mathcal U(x,y)\right|dxdy=0.
\end{equation}
Combining \eqref{2.15c} with \eqref{2.15d}, dividing by $\varepsilon$, letting $\varepsilon\to0^+$ and using \eqref{2.15e}, we obtain
$$
\|u_0\|^{2(\theta-1)}\langle u_0,\varphi\rangle-\lambda\int_\Omega(u^+_0)^{-\gamma}\varphi dx-\int_\Omega(u^+_0)^{2^*_s-1}\varphi dx\geq0,
$$
for any $\varphi\in X_0$. By the arbitrariness of $\varphi$, we prove that $u_0$ verifies \eqref{weak}, that is $u_0$ is a nontrivial solution of \eqref{P+}.

Finally, by considering $\varphi=u^-_0$ in \eqref{weak} and using \eqref{eur}, we see that $\|u^-_0\|=0$, which implies that $u_0$ is nonnegative. Moreover, by the maximum principle in \cite[Proposition 2.2.8]{S}, we can deduce that $u_0$ is a positive solution of \eqref{P+} and so also solves problem \eqref{P}. This concludes the proof.
\end{proof}

We end this section observing that the result in Theorem \ref{prima} can be extended to more general Kirchhoff problems.
That is, we can consider the following problem
\begin{equation}\label{Pm}
\left\{\begin{array}{ll}
M\left(\displaystyle\iint_{\mathbb{R}^{2N}}\frac{|u(x)-u(y)|^p}{|x-y|^{N+ps}}dxdy\right)(-\Delta)^s_p u = \displaystyle\frac{\lambda}{u^\gamma}+u^{p^*_s-1}&\quad\mbox{in } \Omega,\\
u>0&\quad\mbox{in } \Omega,\\
u=0&\quad\mbox{in } \mathbb{R}^N\setminus\Omega,
\end{array}
\right.
\end{equation}
where $p^*_s=pN/(N-ps)$, with here $N>ps$ and $p>1$, while the Kirchhoff coefficient $M$ satisfies condition
\begin{enumerate}
\item[$(\mathcal M)$]
{\em $M:\mathbb R^+_0\rightarrow\mathbb R^+_0$ is continuous and nondecreasing. There exist numbers $a>0$ and $\vartheta$ such that for any $t\in\mathbb R^+_0$
$$
\mathscr M(t):=\int_0^t M(\tau)d\tau\geq a\,t^\vartheta,\quad\mbox{with }\vartheta
\begin{cases}
\in(1,p^*_s/p) & \mbox{ if }M(0)=0,\\
=1 & \mbox{ if }M(0)>0.
\end{cases}
$$}\end{enumerate}
The main operator $(-\Delta)^s_p$ is the fractional $p$--Laplacian which may be defined, for any function $\varphi\in C^\infty_0(\Omega)$, as
\begin{equation*}
(-\Delta)^s_p \varphi(x)= 2\lim_{\varepsilon\rightarrow 0^+}\int_{\mathbb{R}^N\setminus B_\varepsilon(x)} \frac{|\varphi(x)-\varphi(y)|^{p-2}\big(\varphi(x)-\varphi(y)\big)}{|x-y|^{N+ps}}dy,\quad x\in\mathbb R^N,
\end{equation*}
where $B_\varepsilon(x)=\{y\in\mathbb{R}^N:\,\,|x-y|<\varepsilon\}$.
Then, arguing as in the proof of Theorem \ref{prima} and observing that we have not used yet the assumption that $\partial\Omega$ is continuous, we can prove the following result.

\begin{theorem} Let $s\in(0,1)$, $p>1$, $N>ps$, $\gamma\in(0,1)$ and let $\Omega$ be an open bounded subset of $\mathbb R^N$. Let $M$ satisfy $(\mathcal M)$. Then, there exists $\lambda_0>0$ such that for any $\lambda\in(0,\lambda_0]$ problem \eqref{Pm} admits a solution.
\end{theorem}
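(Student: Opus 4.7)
The plan is to transcribe the proof of Theorem \ref{prima} into the nonlinear $p$-fractional setting. I would work in the natural space $W$ of $L^p$-functions vanishing a.e.\ outside $\Omega$ whose Gagliardo $(s,p)$-seminorm $\|u\|^p=\iint_{\mathbb R^{2N}}|u(x)-u(y)|^p|x-y|^{-N-ps}dxdy$ is finite (this replaces $X_0$; see \cite{SV}), and associate with \eqref{Pm} the energy
$$J_\lambda(u)=\frac{1}{p}\mathscr M(\|u\|^p)-\frac{\lambda}{1-\gamma}\int_\Omega(u^+)^{1-\gamma}dx-\frac{1}{p^*_s}\|u^+\|_{p^*_s}^{p^*_s}.$$
The $p$-fractional Sobolev embedding $W\hookrightarrow L^{p^*_s}(\Omega)$ with best constant $S_p$, the compact embedding $W\hookrightarrow L^q(\Omega)$ for $q<p^*_s$, and the Brezis-Lieb lemma are all available and play the roles of \eqref{S}, \eqref{convergences}, \eqref{bl}.

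First I would reproduce Lemma \ref{mp}: by H\"older and Sobolev, $\int_\Omega(u^+)^{1-\gamma}dx\le C\|u\|^{1-\gamma}$, while $(\mathcal M)$ furnishes $\mathscr M(\|u\|^p)\ge a\|u\|^{p\vartheta}$. Since $1-\gamma<1\le p\vartheta<p^*_s$ (the case $\vartheta=1$ using $p<p^*_s$, which holds whenever $N>ps$; the case $\vartheta>1$ being hypothesized as $\vartheta<p^*_s/p$), one gets $\rho\in(0,1]$, $\lambda_0>0$ and $\alpha>0$ with $J_\lambda|_{\|u\|=\rho}\ge\alpha$ and $m_\lambda:=\inf_{\overline B_\rho}J_\lambda<0$ for $\lambda\in(0,\lambda_0]$. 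Next, for a minimizing sequence $\{u_k\}\subset\overline B_\rho$, extract $u_k\rightharpoonup u_0$ and set $w_k=u_k-u_0$. Brezis-Lieb splits the Gagliardo seminorm and the $L^{p^*_s}$-norm as in \eqref{bl}, while the monotonicity of $M$ supplies the key superadditivity
$$\mathscr M(\|u_0\|^p+\|w_k\|^p)=\mathscr M(\|u_0\|^p)+\int_{\|u_0\|^p}^{\|u_0\|^p+\|w_k\|^p}\!\!M(\tau)d\tau\ \ge\ \mathscr M(\|u_0\|^p)+\mathscr M(\|w_k\|^p)\ \ge\ \mathscr M(\|u_0\|^p)+a\|w_k\|^{p\vartheta},$$
which replaces the inequality $(x+y)^\theta\ge x^\theta+y^\theta$ used in Theorem \ref{prima}. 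Combined with $\tfrac{a}{p}\|w_k\|^{p\vartheta}-\tfrac{1}{p^*_s}\|w_k^+\|_{p^*_s}^{p^*_s}\ge 0$ for $k$ large (from the geometry applied to $w_k\in\overline B_\rho$) and with the convergence $\int(u_k^+)^{1-\gamma}\to\int(u_0^+)^{1-\gamma}$ obtained by H\"older, this yields $J_\lambda(u_0)=m_\lambda<0$, so $u_0\not\equiv 0$ and $u_0\in B_\rho$.

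To show that $u_0$ solves \eqref{Pm}, I would mimic the second half of Theorem \ref{prima}. Testing $J_\lambda(u_0+t\psi)\ge J_\lambda(u_0)$ for $\psi\ge 0$ in $W$, dividing by $t>0$, letting $t\to 0^+$ and using Fatou on the singular piece gives
$$M(\|u_0\|^p)\,\langle u_0,\psi\rangle_{s,p}\ -\ \lambda\int_\Omega(u_0^+)^{-\gamma}\psi\, dx\ -\ \int_\Omega(u_0^+)^{p^*_s-1}\psi\, dx\ \ge\ 0,$$
where $\langle u,\psi\rangle_{s,p}:=\iint|u(x)-u(y)|^{p-2}(u(x)-u(y))(\psi(x)-\psi(y))|x-y|^{-N-ps}dxdy$. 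Differentiating $t\mapsto J_\lambda((1+t)u_0)$ at $t=0$ yields the identity $M(\|u_0\|^p)\|u_0\|^p=\lambda\int(u_0^+)^{1-\gamma}+\int(u_0^+)^{p^*_s}$. Then, testing with $\psi_\varepsilon=u_0^++\varepsilon\varphi$ for arbitrary $\varphi\in W$ and running through the computations \eqref{2.15a}-\eqref{2.15e} with the pointwise identities \eqref{eur}, \eqref{eur2} replaced by their standard $p$-versions
$$|a-b|^{p-2}(a-b)(a^+-b^+)\ge|a^+-b^+|^p,\qquad (a-b)|a-b|^{p-2}(a^--b^-)\le -|a^--b^-|^p\quad(a,b\in\mathbb R),$$
produces the weak formulation of \eqref{Pm} on all of $W$. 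Finally, $\varphi=u_0^-$ forces $u_0\ge 0$, and the nonlocal strong maximum principle (e.g.\ for the fractional $p$-Laplacian) gives $u_0>0$ in $\Omega$.

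The main obstacle is the third paragraph: the original argument is explicitly bilinear, so upgrading the identities \eqref{eur}, \eqref{eur2} and the decomposition \eqref{2.15b} to their nonlinear $p$-fractional counterparts requires invoking the above pointwise inequalities and rewriting the splitting of $\langle u_0,\psi_\varepsilon+\psi_\varepsilon^-\rangle$ accordingly. Once this is in place, the $L^1$-absolute continuity control \eqref{2.15e} of the remainder on $\Omega_\varepsilon$ transfers with no essential change, and the Fatou step for the singular term is insensitive to $p$. The hypothesis that $\partial\Omega$ be continuous, used elsewhere only for the density of $C^\infty_0(\Omega)$, is not invoked anywhere above, which explains its absence from the statement.
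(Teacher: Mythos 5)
Your proposal is correct and takes exactly the route the paper intends: the paper itself gives no details beyond ``arguing as in the proof of Theorem \ref{prima} and observing that we have not used yet the assumption that $\partial\Omega$ is continuous.'' You have supplied precisely the right adaptations --- the superadditivity $\mathscr M(t_1+t_2)\ge\mathscr M(t_1)+\mathscr M(t_2)$ coming from the monotonicity of $M$ (replacing $(x+y)^\theta\ge x^\theta+y^\theta$), the coercivity $\mathscr M(t)\ge at^\vartheta$ with $1<p\vartheta<p^*_s$ driving the mountain-pass geometry and the lower bound \eqref{2.10}, and the $p$-version pointwise inequalities in place of the bilinear ones in \eqref{eur}, \eqref{eur2} and \eqref{2.15b}, exploiting that $\langle u,\cdot\rangle_{s,p}$ is still linear in its second slot, which is all that the computation \eqref{2.15a}--\eqref{2.15e} requires.
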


\section{A mountain pass solution for problem \eqref{Pn}}\label{sec mountain}

In this section we prove the existence of a positive solution for perturbed problem \eqref{Pn}, by the mountain pass theorem. {\em For this, throughout this section we assume $n\in\mathbb N$, without further mentioning}. Now, we first prove that the related functional $J_{n,\lambda}$ satisfies all the geometric features required by the mountain pass theorem.

\begin{lemma}\label{mp2} Let $\rho\in(0,1]$, $\lambda_0=\lambda_{0}(\rho)>0$ and $\alpha=\alpha(\rho)>0$ be given as in Lemma \ref{mp}.
Then, for any $\lambda\in(0,\lambda_0]$ and any $u\in X_0$, with $\|u\|\leq \rho$, functional $J_{n,\lambda}(u)\ge\alpha$.

Furthermore, there exists $e\in X_0$, with $\|e\|>\rho$, such that $J_{n,\lambda}(e)<0$.
\end{lemma}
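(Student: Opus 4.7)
The plan is to piggy-back on Lemma~\ref{mp} by establishing the pointwise comparison $J_{n,\lambda}\ge J_\lambda$ on all of $X_0$, from which both assertions of Lemma~\ref{mp2} will follow almost at once. The key observation is the elementary subadditivity inequality
$$(a+b)^{1-\gamma}\le a^{1-\gamma}+b^{1-\gamma}\qquad\text{for all }a,b\ge 0,$$
valid because $t\mapsto t^{1-\gamma}$ is concave with $0^{1-\gamma}=0$ (recall $\gamma\in(0,1)$). Applying it with $a=u^+$ and $b=1/n$ and integrating over $\Omega$, the definition \eqref{jn} of $J_{n,\lambda}$ immediately yields
$$J_{n,\lambda}(u)\ge J_\lambda(u)\quad\text{for every }u\in X_0\text{ and every }n\in\mathbb N.$$
The first assertion is then a direct transcription of Lemma~\ref{mp}; note that it has to be interpreted on the sphere $\|u\|=\rho$ (rather than throughout the closed ball) in order to be consistent with $J_{n,\lambda}(0)=0<\alpha$, a point worth flagging as the statement appears to contain a typo.

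For the second assertion I will fix any $v\in X_0$ with $v^+\not\equiv 0$ (for instance a nonnegative, nontrivial test function in $C^\infty_0(\Omega)$) and analyse the map $t\mapsto J_{n,\lambda}(tv)$ for $t>0$. Since the singular integrand $(tv^++1/n)^{1-\gamma}-(1/n)^{1-\gamma}$ is nonnegative, its contribution to $J_{n,\lambda}(tv)$ is nonpositive, giving the clean upper bound
$$J_{n,\lambda}(tv)\le\frac{t^{2\theta}}{2\theta}\|v\|^{2\theta}-\frac{t^{2^*_s}}{2^*_s}\|v^+\|_{2^*_s}^{2^*_s}.$$
Because $2\theta<2^*_s$, the right-hand side tends to $-\infty$ as $t\to+\infty$. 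Picking any $t_0$ large enough that both $\|t_0 v\|>\rho$ and $J_{n,\lambda}(t_0 v)<0$ and setting $e:=t_0 v$ completes the construction.

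I do not expect any serious obstacle here: the whole argument rests on the subadditivity of $t\mapsto t^{1-\gamma}$ on $[0,\infty)$ together with the sub-critical/super-critical gap $2\theta<2^*_s$ that already governed Lemma~\ref{mp}. In particular, the truncation $u^++1/n$ only lowers the singular energy compared with $u^+$, so the perturbed functional $J_{n,\lambda}$ inherits the mountain-pass geometry of $J_\lambda$ uniformly in $n\in\mathbb N$, which will be crucial later when passing to the limit $n\to\infty$.
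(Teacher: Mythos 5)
Your proof is correct and follows essentially the same route as the paper: the subadditivity of $t\mapsto t^{1-\gamma}$ gives the pointwise bound $J_{n,\lambda}\ge J_\lambda$ on $X_0$, from which the first assertion is inherited from Lemma~\ref{mp}, and the second assertion follows from $J_{n,\lambda}(tv)\to-\infty$ as $t\to\infty$ because $1-\gamma<2\theta<2^*_s$. Your remark that the first assertion must be read on the sphere $\|u\|=\rho$ rather than the ball $\|u\|\le\rho$ (since $J_{n,\lambda}(0)=0<\alpha$) is a correct observation of a typo in the statement, consistent with Lemma~\ref{mp} and with the mountain-pass geometry invoked in Theorem~\ref{seconda}.
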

\begin{proof}
Since $\gamma\in(0,1)$, by the subadditivity of $t\mapsto t^{1-\gamma}$, we have
\begin{equation}\label{subadd}
(u^++1/n)^{1-\gamma}-(1/n)^{1-\gamma}\leq(u^+)^{1-\gamma}\quad\mbox{a.e. in }\Omega, 
\end{equation}
for any $u\in X_0$ and any $n\in\mathbb N$. Thus, we have $J_{n,\lambda}(u)\ge J_\lambda(u)$ for any $u\in X_0$ and the first part of lemma directly follows by Lemma \ref{mp}.

For any $v\in X_0$, with $v^+\not\equiv0$, and $t>0$ we have
$$
J_{n,\lambda}(tv)=\frac{t^{2\theta}}{2\theta}\|v\|^{2\theta}-\frac{\lambda}{1-\gamma}\int_\Omega\left[(tv^++1/n)^{1-\gamma}-(1/n)^{1-\gamma}\right]dx -\frac{t^{2^*_s}}{2^*_s}\|v^+\|^{2^*_s}_{2^*_s}\to-\infty\quad\mbox{as }t\to\infty,
$$
being $1-\gamma<1<2\theta<2^*_s$. Hence, we can find $e\in X_0$, with $\|e\|>\rho$ sufficiently large, such that $J_{n,\lambda}(e)<0$. This concludes the proof.
\end{proof}

We discuss now the compactness property for the functional $J_{n,\lambda}$, given by
the Palais--Smale condition. We recall that $\{u_k\}_k\subset X_0$
is a Palais--Smale sequence for $J_{n,\lambda}$ at level $c\in\mathbb R$ if
\begin{equation}\label{e2.1}
J_{n,\lambda}(u_k)\to c\quad\mbox{and}\quad J'_{n,\lambda}(u_k)\to 0\quad\mbox{in $(X_0)'$ as }k\to\infty.
\end{equation}
We say that $J_{n,\lambda}$ satisfies the Palais--Smale condition at level $c$ if
any Palais--Smale sequence $\{u_k\}_k$ at level $c$ admits a convergent subsequence in $X_0$.

Before proving this compactness condition, we introduce the following positive
constants which will help us for a better explanation
\begin{equation}\label{costanti}
D_1=\left(\frac{1}{2\theta}-\frac{1}{2^*_s}\right)S^{\frac{2^*_s\theta}{2^*_s-2\theta}}\qquad D_2=\frac{\displaystyle\left[\left(\frac{1}{1-\gamma}+\frac{1}{2^*_s}\right)|\Omega|^{\frac{2^*_s-1+\gamma}{2^*_s}}S^{-\frac{1-\gamma}{2}}\right]^{\frac{2\theta}{2\theta-1+\gamma}}}{\displaystyle\left(\frac{1}{2\theta}-\frac{1}{2^*_s}\right)^{\frac{1-\gamma}{2\theta-1+\gamma}}}.
\end{equation}

\begin{lemma}\label{palais} Let $\lambda>0$. Then, the functional $J_{n,\lambda}$ satisfies the Palais--Smale condition at any level $c\in\mathbb R$ verifying
\begin{equation}\label{livello}
c<D_1-D_2\lambda^{2\theta/(2\theta-1+\gamma)},
\end{equation}
with $D_1$, $D_2>0$ given as in \eqref{costanti}.
\end{lemma}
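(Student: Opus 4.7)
\medskip\noindent
\textbf{Plan.} The strategy is the standard four-step compactness argument for critical Kirchhoff problems, adapted to the singular perturbation. Fix a Palais--Smale sequence $\{u_k\}_k\subset X_0$ for $J_{n,\lambda}$ at level $c$ satisfying \eqref{livello}.

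First I would show boundedness of $\{u_k\}_k$ in $X_0$. Combining $J_{n,\lambda}(u_k)-\tfrac{1}{2^*_s}\langle J'_{n,\lambda}(u_k),u_k\rangle$ with the fixed-$n$ bound $(u_k^++1/n)^{-\gamma}\leq n^\gamma$, the H\"older estimate \eqref{holder}, and the Sobolev embedding \eqref{S}, the singular contributions are controlled by $C(\lambda,n)(1+\|u_k\|)$, so that $2\theta>1$ closes the argument. Passing to a subsequence, $u_k\rightharpoonup u$ in $X_0$, $u_k\to u$ strongly in $L^p(\Omega)$ for every $p\in[1,2^*_s)$ and a.e.\ in $\Omega$, and $\|u_k\|^2\to A$ for some $A\geq\|u\|^2$. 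Sending $k\to\infty$ in $\langle J'_{n,\lambda}(u_k),\varphi\rangle\to 0$ (dominated convergence handles the singular term, thanks to the uniform bound $(u_k^++1/n)^{-\gamma}|\varphi|\leq n^\gamma|\varphi|$, and $(u_k^+)^{2^*_s-1}\rightharpoonup(u^+)^{2^*_s-1}$ weakly in $L^{2^*_s/(2^*_s-1)}(\Omega)$) yields the \emph{rescaled} identity
\begin{equation*}
A^{\theta-1}\langle u,\varphi\rangle=\lambda\int_\Omega\frac{\varphi}{(u^++1/n)^\gamma}\,dx+\int_\Omega(u^+)^{2^*_s-1}\varphi\,dx\qquad\text{for every }\varphi\in X_0.
\end{equation*}
If $A=0$ then $u_k\to 0$ in $X_0$ and we are done; otherwise, testing this with $\varphi=u^-$ and using \eqref{eur} together with $(u^++1/n)^{-\gamma}u^-=n^\gamma u^-$ and $(u^+)^{2^*_s-1}u^-\equiv 0$ a.e.\ forces $u\geq 0$.

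Next, apply Brezis--Lieb \cite[Theorem 2]{BL} to $v_k:=u_k-u$: along a further subsequence $\|v_k\|^2\to\ell$ and $\|v_k^+\|_{2^*_s}^{2^*_s}\to\ell'$, with $A=\|u\|^2+\ell$. Subtracting the rescaled identity evaluated at $\varphi=u$ from the limit of $\langle J'_{n,\lambda}(u_k),u_k\rangle=o(1)$ (using strong $L^{1-\gamma}$ convergence to dispatch the $\lambda$-term) gives $\ell'=A^{\theta-1}\ell$, while \eqref{S} applied to $v_k^+$ yields $\ell'\leq S^{-2^*_s/2}\ell^{2^*_s/2}$. Assuming $\ell>0$, combining the two and using $A\geq\ell$ forces the lower bound $\ell\geq S^{2^*_s/(2^*_s-2\theta)}$.

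Finally, substituting $\|u^+\|_{2^*_s}^{2^*_s}$ from the rescaled identity at $\varphi=u$ back into $J_{n,\lambda}(u_k)\to c$ (after Brezis--Lieb on $\|u_k^+\|_{2^*_s}^{2^*_s}$) produces
\begin{equation*}
c=\left(\frac{1}{2\theta}-\frac{1}{2^*_s}\right)A^\theta+\frac{\lambda}{2^*_s}\int_\Omega\frac{u}{(u+1/n)^\gamma}\,dx-\frac{\lambda}{1-\gamma}\int_\Omega\bigl[(u+1/n)^{1-\gamma}-(1/n)^{1-\gamma}\bigr]\,dx.
\end{equation*}
The super-additivity $(a+b)^\theta\geq a^\theta+b^\theta$ (valid since $\theta\geq 1$) together with $\ell\geq S^{2^*_s/(2^*_s-2\theta)}$ gives $(\tfrac{1}{2\theta}-\tfrac{1}{2^*_s})A^\theta\geq D_1+(\tfrac{1}{2\theta}-\tfrac{1}{2^*_s})\|u\|^{2\theta}$; meanwhile \eqref{subadd}, $u\geq 0$, and \eqref{holder} bound the $\lambda$-terms below by $-\lambda(\tfrac{1}{1-\gamma}+\tfrac{1}{2^*_s})|\Omega|^{(2^*_s-1+\gamma)/2^*_s}S^{-(1-\gamma)/2}\|u\|^{1-\gamma}$. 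Young's inequality with conjugate exponents $2\theta/(1-\gamma)$ and $2\theta/(2\theta-1+\gamma)$ absorbs the latter into the $\|u\|^{2\theta}$ term at precisely the cost $D_2\lambda^{2\theta/(2\theta-1+\gamma)}$, leading to $c\geq D_1-D_2\lambda^{2\theta/(2\theta-1+\gamma)}$, which contradicts \eqref{livello}; hence $\ell=0$ and $u_k\to u$ in $X_0$. The main obstacle is this last calibration --- matching the H\"older/Sobolev constants to the precise form of $D_2$ in \eqref{costanti} --- for which the preliminary reduction $u\geq 0$ is essential, since otherwise the sign of $\int u/(u+1/n)^\gamma$ would block a uniform lower bound on the $\lambda$-contribution.
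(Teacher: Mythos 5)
Your proof is correct and follows the same overall route as the paper (boundedness via $J_{n,\lambda}-\tfrac{1}{2^*_s}J'_{n,\lambda}$, weak convergence plus Brezis--Lieb, a key identity linking the Kirchhoff coefficient to the $L^{2^*_s}$ defect, a Sobolev lower bound, and a Young-inequality energy estimate contradicting \eqref{livello}). There are, however, some stylistic differences worth noting. First, the paper reduces to a nonnegative Palais--Smale sequence at the outset by showing $\|u_k^-\|\to 0$ via $\langle J'_{n,\lambda}(u_k),-u_k^-\rangle\to 0$ and \eqref{eur}; you instead pass to the limit first and then deduce $u\geq 0$ from the limit equation tested against $u^-$. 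This is fine, but it means your Brezis--Lieb step must be applied to $j(t)=(t^+)^{2^*_s}$ rather than $|t|^{2^*_s}$; this is covered by \cite[Theorem 2]{BL}, but it should be flagged since it is not the textbook case. Second, the paper obtains the key relation by testing $J'_{n,\lambda}(u_k)$ directly against $u_k-u$, while you derive the rescaled limit identity for $u$ against arbitrary $\varphi$ and then subtract it at $\varphi=u$ from the limit of $\langle J'_{n,\lambda}(u_k),u_k\rangle$; the two computations yield the same crucial formula $\mu^{2(\theta-1)}\lim\|u_k-u\|^2=\lim\|u_k-u\|^{2^*_s}_{2^*_s}$. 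Third, your route produces the explicit identity $c=\bigl(\tfrac{1}{2\theta}-\tfrac{1}{2^*_s}\bigr)A^\theta+(\text{lower-order $\lambda$ terms})$ and a lower bound on $\ell=\lim\|u_k-u\|^2$ directly, which makes the super-additivity step $A^\theta\geq\|u\|^{2\theta}+\ell^\theta$ transparent; the paper instead bounds $\mu^2=\lim\|u_k\|^2$ from below (its \eqref{eq31}) and the display preceding its final Young estimate is somewhat telegraphic on this point, so your presentation of that final step is in fact cleaner. The final calibration with the conjugate exponents $2\theta/(1-\gamma)$ and $2\theta/(2\theta-1+\gamma)$ matches the constant $D_2$ from \eqref{costanti} exactly, as intended.
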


\begin{proof}
Let $\lambda>0$ and let $\{u_k\}_k$ be a Palais--Smale sequence in $X_0$ at level $c\in\mathbb R$, with $c$ satisfying \eqref{livello}. We first prove the boundedness of $\{u_k\}_k$. By \eqref{e2.1} there exists $\sigma>0$ such that, as $k\to\infty$
$$
\begin{aligned}
c+\sigma\|u_k\|+o(1)&\geq J_{n,\lambda}(u_k)-\frac{1}{2^*_s}\langle J'_{n,\lambda}(u_k), u_k\rangle\\
&=\left(\frac{1}{2\theta}-\frac{1}{2^*_s}\right)\|u_k\|^{2\theta}-\frac{\lambda}{1-\gamma}\int_\Omega[(u^+_k+1/n)^{1-\gamma}-(1/n)^{1-\gamma}]dx\\
&\quad+\frac{\lambda}{2^*_s}\int_\Omega(u^+_k+1/n)^{-\gamma}u_kdx\\
&\geq\left(\frac{1}{2\theta}-\frac{1}{2^*_s}\right)\|u_k\|^{2\theta}-\lambda\left(\frac{1}{1-\gamma}+\frac{1}{2^*_s}\right)\int_\Omega|u_k|^{1-\gamma}dx\\
&\geq\left(\frac{1}{2\theta}-\frac{1}{2^*_s}\right)\|u_k\|^{2\theta}-\lambda\left(\frac{1}{1-\gamma}+\frac{1}{2^*_s}\right)|\Omega|^{\frac{2^*_s-1+\gamma}{2^*_s}}S^{-\frac{1-\gamma}{2}}\|u_k\|^{1-\gamma}
\end{aligned}
$$
where the last two inequalities follow by \eqref{S}, \eqref{subadd} and the H\"older inequality.
Therefore, $\{u_k\}_k$ is bounded in $X_0$, being $1-\gamma<1<2\theta$.
Also, $\{u^-_k\}_k$ is bounded in $X_0$ and by \eqref{e2.1} we have
$$
\lim_{k\to\infty}\langle J'_{n,\lambda}(u_k), -u^-_k\rangle=\lim_{k\to\infty}\|u_k\|^{2(\theta-1)}\langle u_k,-u^-_k\rangle=0.
$$
Thus, by inequality \eqref{eur} we deduce that $\|u^-_k\|\to0$ as $k\to\infty$, which yields
$$
J_{n,\lambda}(u_k)=J_{n,\lambda}(u_k^+)+o(1)\quad\mbox{and}\quad J'_{n,\lambda}(u_k)=J'_{n,\lambda}(u_k^+)+o(1)\quad\mbox{as }k\to\infty.
$$
Hence, we can suppose that $\{u_k\}_k$ is a sequence of nonnegative functions. 

By the boundedness of $\{u_k\}_k$ and by using \cite[Lemma 8]{SV} and \cite[Theorem 4.9]{B}, there exist a subsequence, still
denoted by $\{u_k\}_k$, and a function $u\in X_0$
such that
\begin{equation}\label{e2.4}
\begin{array}{ll}
u_k\rightharpoonup u\text{ in }X_0,\quad & \|u_k\|\rightarrow \mu, \\
u_k\rightharpoonup u\mbox{ in } L^{2^*_s}(\Omega),\quad &\left\|u_k-u\right\|_{2^*_s}\to\ell,\\
u_k\rightarrow u\mbox{ in } L^p(\Omega)\text{ for any }p\in[1,2^*_s),\quad&u_k\to u\text{ a.e.  in }\Omega,\quad u_k\leq h\text{ a.e.  in }\Omega,
\end{array}
\end{equation}
as $k\to\infty$, with $h\in L^p(\Omega)$ for a fixed $p\in[1,2^*_s)$.
If $\mu=0$, then immediately $u_k\to0$ in $X_0$ as $k\to\infty$. Hence, let us assume that $\mu>0$.

Since $n\in\mathbb N$, by \eqref{e2.4} it follows that
$$\left|\frac{u_k-u}{(u_k+1/n)^\gamma}\right|\leq n^\gamma(h+|u|)\quad\mbox{a.e. in }\Omega,
$$
so by the dominated convergence theorem and \eqref{e2.4}, we have
\begin{equation}\label{e2.5}
\lim_{k\to\infty}\int_{\Omega}\frac{u_k-u}{(u_k+1/n)^\gamma}dx=0.
\end{equation}
By \eqref{e2.4} and \cite[Theorem 2]{BL} we have
\begin{align}\label{e2.9}
\|u_k\|^2=\|u_k-u\|^2+\|u\|^2+o(1),\quad
\|u_k\|_{2^*_s}^{2^*_s}=\|u_k-u\|_{2^*_s}^{2^*_s}+\|u\|_{2^*_s}^{2^*_s}+o(1)
\end{align}
as $k\to\infty$.
Consequently, we deduce from
\eqref{e2.1}, \eqref{e2.4}, \eqref{e2.5} and \eqref{e2.9} that, as $k\to\infty$
\begin{align*}
o(1)&=\langle J_{n,\lambda}^\prime(u_k),u_k-u\rangle
\nonumber\\
&=\|u_k\|^{2(\theta-1)}\langle u_k,u_k-u\rangle
-\lambda\int_{\Omega}\frac{u_k-u}{(u_k+1/n)^\gamma}dx
-\int_{\Omega}u_k^{2^*_s-1}(u_k-u)dx\\
&=\mu^{2(\theta-1)}(\mu^2-\|u_k\|^2)
-\|u_k\|^{2^*_s}_{2^*_s}+\|u\|^{2^*_s}_{2^*_s}
+o(1)=\mu^{2(\theta-1)}\|u_k-u\|^2
-\|u_k-u\|^{2^*_s}_{2^*_s} +o(1).\nonumber
\end{align*}
Therefore, we have proved the crucial formula
\begin{equation}\label{I}
\mu^{2(\theta-1)}\lim_{k\to\infty}\|u_k-u\|^2=\lim_{k\to\infty}\|u_k-u\|^{2^*_s}_{2^*_s}.
\end{equation}
If $\ell=0$, since $\mu>0$, by \eqref{e2.4} and \eqref{I} we have $u_k\to u$ in $X_0$ as $k\to\infty$, concluding the proof.

Thus, let us assume by contradiction that $\ell>0$.
By \eqref{S}, the notation in \eqref{e2.4} and \eqref{I}, we get
\begin{equation}\label{ll}
\ell^{2^*_s}\ge S\, \mu^{2(\theta-1)}\ell^2.
\end{equation}
Noting that \eqref{I} implies in particular that
$$\mu^{2(\theta-1)}\big(\mu^2-\|u\|^2\big)=\ell^{2^*_s},$$
using \eqref{ll}, it follows that
$$
\big(\ell^{2^*_s}\big)^{2s/N}=(\mu^{2(\theta-1)})^{2s/N}\big(\mu^2-\|u\|^2\big)^{2s/N}\ge S\,\mu^{2(\theta-1)}.
$$
From this, we obtain
$$\mu^{4s/N}\ge\big(\mu^2-\|u\|^2\big)^{2s/N}\ge S\, (\mu^{2(\theta-1)})^{\frac{N-2s}{N}},$$
and considering $N<2s\theta/(\theta-1)=2s\theta'$, we have
\begin{align}\label{eq31}
\mu^2\geq S^{\frac{N}{2s\theta-N(\theta-1)}}.
\end{align}
Indeed, the restriction $N/(2\theta')<s$ follows directly from the fact that $1<\theta<2^*_s/2=N/(N-2s)$.
By \eqref{subadd} and considering that $n\in\mathbb N$, for any $k\in\mathbb N$ we have
$$
J_{n,\lambda}(u_k)-\frac{1}{2^*_s}\left\langle J'_{n,\lambda}(u_k),u_k\right\rangle\geq\left(\frac{1}{2\theta}-\frac{1}{2^*_s}\right)\|u_k\|^{2\theta}-\lambda\left(\frac{1}{1-\gamma}+\frac{1}{2^*_s}\right)\int_\Omega u_k^{1-\gamma}dx.
$$
From this, as $k\to\infty$, being $\theta\geq1$, by \eqref{e2.1}, \eqref{e2.4}, \eqref{e2.9}, \eqref{eq31}, the H\"older inequality and the Young inequality, we obtain
$$
\begin{alignedat}4
c&\geq\left(\frac{1}{2\theta}-\frac{1}{2^*_s}\right)\left(\mu^{2\theta}+\|u\|^{2\theta}\right)-\lambda\left(\frac{1}{1-\gamma}+\frac{1}{2^*_s}\right)|\Omega|^{\frac{2^*_s-1+\gamma}{2^*_s}}S^{-\frac{1-\gamma}{2}}\|u\|^{1-\gamma}\\
&\geq\left(\frac{1}{2\theta}-\frac{1}{2^*_s}\right)\left(\mu^{2\theta}+\|u\|^{2\theta}\right)-\left(\frac{1}{2\theta}-\frac{1}{2^*_s}\right)\|u\|^{2\theta}\\
&\quad-\left(\frac{1}{2\theta}-\frac{1}{2^*_s}\right)^{-\frac{1-\gamma}{2\theta-1+\gamma}}\left[\lambda\left(\frac{1}{1-\gamma}+\frac{1}{2^*_s}\right)|\Omega|^{\frac{2^*_s-1+\gamma}{2^*_s}}S^{-\frac{1-\gamma}{2}}\right]^{\frac{2\theta}{2\theta-1+\gamma}}\\
&\geq\left(\frac{1}{2\theta}-\frac{1}{2^*_s}\right)S^{\frac{2^*_s\theta}{2^*_s-2\theta}} -\left(\frac{1}{2\theta}-\frac{1}{2^*_s}\right)^{-\frac{1-\gamma}{2\theta-1+\gamma}}\left[\lambda\left(\frac{1}{1-\gamma}+\frac{1}{2^*_s}\right)|\Omega|^{\frac{2^*_s-1+\gamma}{2^*_s}}S^{-\frac{1-\gamma}{2}}\right]^{\frac{2\theta}{2\theta-1+\gamma}}
\end{alignedat}
$$
which contradicts \eqref{livello}, since \eqref{costanti}. This concludes the proof.
\end{proof}

We now give a control from above for functional $J_{n,\lambda}$, under a suitable situation.
For this, we assume, without loss of generality, that $0\in\Omega$.
By \cite{CT} we know that the infimum in \eqref{S} is attained at the function
\begin{equation}\label{ueps}
u_\varepsilon(x)=\frac{\varepsilon^{(N-2s)/2}}{(\varepsilon^2+|x|^2)^{(N-2s)/2}},\quad\mbox{with }\varepsilon>0,
\end{equation}
that is, it holds true that
$$\displaystyle\iint_{\mathbb{R}^{2N}}\frac{|u_\varepsilon(x)-u_\varepsilon(y)|^2}{|x-y|^{N+2s}}dxdy=S\|u_\varepsilon\|^2_{L^{2^*_s}(\mathbb R^N)}.$$
Let us fix $r>0$ such that $B_{4r}\subset\Omega$, where $B_{4r}=\{x\in\mathbb R^N:\,|x|<4r\}$, and let us introduce a cut--off function $\phi\in C^\infty(\mathbb R^N,[0,1])$ such that
\begin{equation}\label{phi}
\phi=\begin{cases}
1 & \mbox{ in }B_r,\\
0 & \mbox{ in }\mathbb R^N\setminus B_{2r}.
\end{cases}
\end{equation}
For any $\varepsilon>0$, we set
\begin{equation}\label{psi}
\psi_\varepsilon=\frac{\phi u_\varepsilon}{\|\phi u_\varepsilon\|^2_{2^*_s}}\in X_0.
\end{equation}
Then, we can prove the following result.

\begin{lemma}\label{lemma3.3} There exist $\psi\in X_0$ and $\lambda_1>0$ such that for any $\lambda\in(0,\lambda_1)$ 
$$\sup_{t\geq0} J_{n,\lambda}(t\psi)<D_1-D_2\lambda^{2\theta/(2\theta-1+\gamma)},$$
with $D_1$, $D_2>0$ given as in \eqref{costanti}.
\end{lemma}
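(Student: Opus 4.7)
The plan is to take $\psi=\psi_{\varepsilon}$ from \eqref{psi} with $\varepsilon>0$ a parameter chosen sufficiently small (and depending on $\lambda_{1}$ and $n$), and to control $\sup_{t\geq0}J_{n,\lambda}(t\psi_{\varepsilon})$ by splitting the functional into a regular Kirchhoff--critical part and a nonnegative singular contribution. Write
\[
J_{n,\lambda}(t\psi_{\varepsilon})=g_{\varepsilon}(t)-\frac{\lambda}{1-\gamma}F_{n}(t),
\]
where $g_{\varepsilon}(t)=\frac{t^{2\theta}}{2\theta}\|\psi_{\varepsilon}\|^{2\theta}-\frac{t^{2^{*}_{s}}}{2^{*}_{s}}\|\psi_{\varepsilon}\|^{2^{*}_{s}}_{2^{*}_{s}}$ is the Kirchhoff--critical part and $F_{n}(t)=\int_{\Omega}[(t\psi_{\varepsilon}+1/n)^{1-\gamma}-(1/n)^{1-\gamma}]\,dx\geq 0$. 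Since $1<2\theta<2^{*}_{s}$, the function $g_{\varepsilon}$ is strictly increasing on $[0,t_{\varepsilon}]$ and strictly decreasing on $[t_{\varepsilon},\infty)$, attaining its maximum at the unique critical point $t_{\varepsilon}=\bigl(\|\psi_{\varepsilon}\|^{2\theta}/\|\psi_{\varepsilon}\|^{2^{*}_{s}}_{2^{*}_{s}}\bigr)^{1/(2^{*}_{s}-2\theta)}$, with
\[
g_{\varepsilon}(t_{\varepsilon})=\Bigl(\frac{1}{2\theta}-\frac{1}{2^{*}_{s}}\Bigr)\Bigl(\frac{\|\psi_{\varepsilon}\|^{2}}{\|\psi_{\varepsilon}\|^{2}_{2^{*}_{s}}}\Bigr)^{\!2^{*}_{s}\theta/(2^{*}_{s}-2\theta)},
\]
while $F_{n}$ is nondecreasing in $t$ with $F_{n}(0)=0$.

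First, I would invoke the standard fractional Brezis--Nirenberg type asymptotics for the cut--off extremal (cf.\ \cite{SV,CT}), namely $\|\psi_{\varepsilon}\|^{2}/\|\psi_{\varepsilon}\|^{2}_{2^{*}_{s}}\leq S+C\varepsilon^{N-2s}$; a first--order Taylor expansion of the power $(\cdot)^{2^{*}_{s}\theta/(2^{*}_{s}-2\theta)}$ at $S$ then yields $g_{\varepsilon}(t_{\varepsilon})\leq D_{1}+C_{1}\varepsilon^{N-2s}$ for some $C_{1}>0$ independent of $\varepsilon,\lambda,n$. Second, I would produce a lower bound on $F_{n}$ near the maximizer of $J_{n,\lambda}(\cdot\,\psi_{\varepsilon})$: since $g'_{\varepsilon}(t_{\varepsilon})=0$ and $F'_{n}>0$ on $(0,\infty)$, the maximizer $t_{\lambda}$ lies in $(0,t_{\varepsilon})$, and a perturbative analysis of the critical point equation $g'_{\varepsilon}(t_{\lambda})=\frac{\lambda}{1-\gamma}F'_{n}(t_{\lambda})$ shows $t_{\lambda}\to t_{\varepsilon}$ as $\lambda\to 0^{+}$; in particular $t_{\lambda}\geq t_{\varepsilon}/2$ for $\lambda$ sufficiently small, whence by monotonicity $F_{n}(t_{\lambda})\geq F_{n}(t_{\varepsilon}/2)=:c_{0}(\varepsilon,n)>0$.

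Combining the two estimates gives
\[
\sup_{t\geq0}J_{n,\lambda}(t\psi_{\varepsilon})\leq D_{1}+C_{1}\varepsilon^{N-2s}-\frac{\lambda\, c_{0}(\varepsilon,n)}{1-\gamma},
\]
and the desired strict inequality follows by choosing $\varepsilon$ small enough so that $C_{1}\varepsilon^{N-2s}$ is absorbed into one half of the singular correction $\frac{\lambda\, c_{0}(\varepsilon,n)}{1-\gamma}$ uniformly for $\lambda\in(0,\lambda_{1})$, and by choosing $\lambda_{1}$ small enough so that $D_{2}\lambda^{2\theta/(2\theta-1+\gamma)}$ is absorbed into the remaining half. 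This last absorption is possible because the exponent $\frac{2\theta}{2\theta-1+\gamma}>1$ (which holds since $\gamma\in(0,1)$), so that $D_{2}\lambda^{2\theta/(2\theta-1+\gamma)}=o(\lambda)$ as $\lambda\to 0^{+}$.

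The main technical obstacle is the delicate interplay among three competing scales: the cut--off error $\varepsilon^{N-2s}$, the singular correction $\lambda\, c_{0}(\varepsilon,n)$, and the target $\lambda^{2\theta/(2\theta-1+\gamma)}$. Because $\psi_{\varepsilon}$ concentrates as $\varepsilon\to 0^{+}$, the constant $c_{0}(\varepsilon,n)$ itself vanishes at a polynomial rate in $\varepsilon$ governed by the integrals $\int_{\Omega}\psi_{\varepsilon}\,dx$ or $\int_{\Omega}\psi_{\varepsilon}^{1-\gamma}\,dx$; a quantitative bookkeeping of these pointwise asymptotics for the cut--off Talenti family, together with a careful calibration of $\varepsilon$ in terms of $\lambda_{1}$ and $n$, is needed to close the estimate.
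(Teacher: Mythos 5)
Your decomposition $J_{n,\lambda}(t\psi_\varepsilon)=g_\varepsilon(t)-\frac{\lambda}{1-\gamma}F_n(t)$, the bound $g_\varepsilon(t_\varepsilon)\leq D_1+C_1\varepsilon^{N-2s}$, and the idea of extracting a positive singular correction $F_n(t_\lambda)\geq c_0(\varepsilon,n)>0$ are all faithful to the paper's strategy. But the final absorption step, as you have written it, cannot work and hides the real content of the lemma. You propose to choose $\varepsilon$ (depending at most on $\lambda_1$ and $n$) so that $C_1\varepsilon^{N-2s}\leq\frac{\lambda\,c_0(\varepsilon,n)}{2(1-\gamma)}$ \emph{uniformly for} $\lambda\in(0,\lambda_1)$. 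For any fixed $\varepsilon>0$ the left side is a fixed positive number while the right side tends to $0$ as $\lambda\to0^+$, so no such $\varepsilon$ exists. The paper resolves this precisely by letting $\varepsilon$ depend on $\lambda$: it takes $\varepsilon=\lambda^{\nu_1/q}$ with $\nu_1$ chosen so that $C_1\varepsilon^{N-2s}$ becomes exactly $C_1\lambda^{2\theta/(2\theta-1+\gamma)}$, and it then lower-bounds the singular integral over the shrinking ball $\{|x|\leq\varepsilon^q\}$ to get a contribution $-C_2\lambda^{\nu_2}$; the free parameter $q$ is chosen so that $\nu_2<2\theta/(2\theta-1+\gamma)$, i.e.\ the gain from the singular term beats both the $\varepsilon^{N-2s}$ error and the $D_2\lambda^{2\theta/(2\theta-1+\gamma)}$ deficit.

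A second, related gap is that you leave $c_0(\varepsilon,n)=F_n(t_\varepsilon/2)$ purely qualitative. You do note in closing that $c_0(\varepsilon,n)$ vanishes at a polynomial rate as $\varepsilon\to0^+$, but this is not a peripheral bookkeeping issue: the entire lemma hinges on whether the decay exponent of $c_0(\varepsilon,n)$, once $\varepsilon=\varepsilon(\lambda)$ is inserted, is strictly smaller than $\frac{2\theta}{2\theta-1+\gamma}-1$ in terms of $\lambda$. The paper encodes this as the explicit constraint \eqref{q} on $q$, obtained by applying the elementary inequality $a^{1-\gamma}-(a+b)^{1-\gamma}\leq-(1-\gamma)b^{(1-\gamma)/p}a^{(p-1)(1-\gamma)/p}$ with $p=2^*_s/2$ and $a=1/n$, $b=t_0\psi_\varepsilon$, which is why the exponent $\frac{(N-2s)(1-\gamma)}{2^*_s}$ (and not $\frac{(N-2s)(1-\gamma)}{2}$) appears in \eqref{binlin3}. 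Without this quantitative lower bound and the resulting admissible range for $q$, the ``calibration'' you allude to is not established, and the strict inequality of the lemma does not follow.
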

\begin{proof}
Let $\lambda$, $\varepsilon>0$.
Let $u_\varepsilon$ and $\psi_\varepsilon$ be as respectively in \eqref{ueps} and in \eqref{psi}.
By \eqref{jn}, we have $J_{n,\lambda}(t\psi_\varepsilon)\to-\infty$ as $t\to\infty$, so that there exists $t_\varepsilon>0$ such that $J_{n,\lambda}(t_\varepsilon \psi_\varepsilon)=\max_{t\geq0}J_{n,\lambda}(t \psi_\varepsilon)$. By Lemma \ref{mp2} we know that $J_{n,\lambda}(t_\varepsilon \psi_\varepsilon)\geq\alpha>0$. Hence, by continuity of $J_{n,\lambda}$, there exist two numbers $t_0$, $t^*>0$ such that $t_0\leq t_\varepsilon\leq t^*$.

Now, since $\|u_\varepsilon\|_{L^{2^*_s}(\mathbb R^N)}$ is independent of $\varepsilon$, by \cite[Proposition 21]{SV2} we have
$$\|\psi_\varepsilon\|^2\leq\frac{\displaystyle\iint_{\mathbb{R}^{2N}}\frac{|u_\varepsilon(x)-u_\varepsilon(y)|^2}{|x-y|^{N+2s}}dxdy}{\|\phi u_\varepsilon\|^2_{2^*_s}}=S+O(\varepsilon^{N-2s}),$$
from which, by the elementary inequality
$$
(a+b)^p\leq a^p+p(a+1)^{p-1}b,\quad\mbox{for any }a>0,\,b\in[0,1],\,p\geq1,
$$
with $p=2\theta$, it follows that, as $\varepsilon\to0^+$
$$
\|\psi_\varepsilon\|^{2\theta}\leq S^\theta+O(\varepsilon^{N-2s}).
$$
Hence, by the last inequality, \eqref{jn} and being $t_0\leq t_\varepsilon\leq t^*$, for any $\varepsilon>0$ sufficiently small, we have
\begin{equation}\label{binlin1}
J_{n,\lambda}(t_\varepsilon\psi_\varepsilon)\leq\frac{t^{2\theta}}{2\theta}S^\theta+C_1\varepsilon^{N-2s}-\frac{\lambda}{1-\gamma}\int_\Omega[(t_0\psi_\varepsilon+1/n)^{1-\gamma}-(1/n)^{1-\gamma}]dx -\frac{t^{2^*_s}}{2^*_s},
\end{equation}
with $C_1$ a suitable positive constant. 
We observe that
$$
\max_{t\geq0}\left(\frac{t^{2\theta}}{2\theta}S^\theta-\frac{t^{2^*_s}}{2^*_s}\right)
=\left(\frac{1}{2\theta}-\frac{1}{2^*_s}\right)S^{\frac{2^*_s\theta}{2^*_s-2\theta}}.
$$
Thus, by \eqref{binlin1} it follows that
\begin{equation}\label{binlin2}
J_{n,\lambda}(t_\varepsilon\psi_\varepsilon)\leq\left(\frac{1}{2\theta}-\frac{1}{2^*_s}\right)S^{\frac{2^*_s\theta}{2^*_s-2\theta}}+C_1\varepsilon^{N-2s}-\frac{\lambda}{1-\gamma}\int_\Omega[(t_0\psi_\varepsilon+1/n)^{1-\gamma}-(1/n)^{1-\gamma}]dx.
\end{equation}
Now, let us consider a positive number $q$ satisfying
\begin{equation}\label{q}
\frac{(N-2s)(1-\gamma)-2q(N-2s)(1-\gamma)+2^*_sqN}{2^*_s}\cdot\frac{2\theta}{2\theta-1+\gamma}\cdot\frac{1}{N-2s}+1-\frac{2\theta}{2\theta-1+\gamma}<0,
\end{equation}
that is, being $2<2\theta<2^*_s$, $N>2s$ and $\gamma\in(0,1)$, such that
$$
0<q<\frac{(N-2s)(2^*_s-2\theta)(1-\gamma)}{2\theta N(2^*_s-2)+4\theta N\gamma+8\theta s(1-\gamma)}.
$$
By the elementary inequality
$$
a^{1-\gamma}-(a+b)^{1-\gamma}\leq -(1-\gamma)b^{\frac{1-\gamma}{p}}a^{\frac{(p-1)(1-\gamma)}{p}},\quad\mbox{for any }a>0,\,b>0\mbox{ large enough},\,p>1,
$$
with $p=2^*_s/2$, and considering $\varepsilon<r^{1/q}$ sufficiently small, with $r$ given by \eqref{phi}, we have
\begin{equation}\label{binlin3}
\begin{alignedat}4
-\frac{1}{1-\gamma}\int_{\{x\in\Omega:\,|x|\leq\varepsilon^q\}}&[(t_0\psi_\varepsilon+1/n)^{1-\gamma}-(1/n)^{1-\gamma}]dx\\
&\leq-\widetilde{C}\varepsilon^{\frac{(N-2s)(1-\gamma)}{2^*_s}}\int_{\{x\in\Omega:\,|x|\leq\varepsilon^q\}}\left[
\frac{1}{(|x|^2+\varepsilon^2)^{\frac{N-2s}{2}}}\right]^{\frac{2(1-\gamma)}{2^*_s}}dx\\
&\leq-C_2\varepsilon^{\frac{(N-2s)(1-\gamma)-2q(N-2s)(1-\gamma)+2^*_s qN}{2^*_s}},
\end{alignedat}
\end{equation}
with $\widetilde{C}$ and $C_2$ two positive constant independent by $\varepsilon$.
By combining \eqref{binlin2} with \eqref{binlin3}, we get
\begin{equation}\label{binlin4}
J_{n,\lambda}(t_\varepsilon\psi_\varepsilon)\leq\left(\frac{1}{2\theta}-\frac{1}{2^*_s}\right)S^{\frac{2^*_s\theta}{2^*_s-2\theta}}+C_1\varepsilon^{N-2s}-C_2\lambda\varepsilon^{\frac{(N-2s)(1-\gamma)-2q(N-2s)(1-\gamma)+2^*_s qN}{2^*_s}}.
\end{equation}
Thus, let us consider $\lambda^*>0$ such that
$$
D_1-D_2\lambda^{\frac{2\theta}{2\theta-1+\gamma}}>0\quad\mbox{for any }\lambda\in(0,\lambda^*)
$$
and let us set 
$$
\begin{alignedat}4
\nu_1=\displaystyle\frac{2q\theta}{(2\theta-1+\gamma)(N-2s)},\quad &\nu_2=\displaystyle\frac{2\theta[(N-2s)(1-\gamma)-2q(N-2s)(1-\gamma)+2^*_s qN]}{2^*_s(2\theta-1+\gamma)(N-2s)}+1,\\
\nu_3=\displaystyle\nu_2-\frac{2\theta}{2\theta-1+\gamma},\quad&\lambda_1=\displaystyle\min\left\{\lambda^*,r^{1/\nu_1},
\left(\frac{C_2}{C_1+D_2}\right)^{-1/\nu_3}\right\},
\end{alignedat}
$$
where $r$ and $q$ are given respectively in \eqref{phi} and \eqref{q}, while we still consider $D_1$ and $D_2$ as defined in \eqref{costanti}.
Then, by considering $\varepsilon=\lambda^{\nu_1/q}$ in \eqref{binlin4}, since \eqref{q} implies that $\nu_3<0$, for any $\lambda\in(0,\lambda_1)$ we have
$$
J_{n,\lambda}(t_\varepsilon\psi_\varepsilon)\leq D_1+C_1\lambda^{\frac{2\theta}{2\theta-1+\gamma}}-C_2\lambda^{\nu_2}
=D_1+\lambda^{\frac{2\theta}{2\theta-1+\gamma}}(C_1-C_2\lambda^{\nu_3})
<D_1-D_2\lambda^{\frac{2\theta}{2\theta-1+\gamma}},
$$
which concludes the proof.
\end{proof}

We can now prove the existence result for \eqref{Pn} by applying the mountain pass theorem.
\begin{theorem}\label{seconda} There exists $\overline{\lambda}>0$ such that, for any $\lambda\in(0,\overline{\lambda})$, problem \eqref{Pn} has a positive solution $v_n\in X_0$, with 
\begin{equation}\label{necessario}
\alpha<J_{n,\lambda}(v_n)<D_1-D_2\lambda^{2\theta/(2\theta-1+\gamma)},
\end{equation}
where $\alpha$, $D_1$ and $D_2$ are given respectively in Lemma \ref{mp} and \eqref{costanti}.
\end{theorem}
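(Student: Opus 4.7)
The plan is to apply the classical mountain pass theorem of Ambrosetti--Rabinowitz to the $C^1$ functional $J_{n,\lambda}$, combining Lemmas~\ref{mp2}, \ref{palais} and \ref{lemma3.3}, which together supply the geometric, compactness and threshold ingredients. First I would set $\overline{\lambda}=\min\{\lambda_0,\lambda_1\}$, where $\lambda_0$ is given by Lemma~\ref{mp2} and $\lambda_1$ by Lemma~\ref{lemma3.3}. For every $\lambda\in(0,\overline\lambda)$ this choice simultaneously guarantees the mountain pass geometry ($J_{n,\lambda}(u)\geq\alpha>0=J_{n,\lambda}(0)$ on $\|u\|=\rho$, and $J_{n,\lambda}$ assumes negative values far out), that the threshold $D_1-D_2\lambda^{2\theta/(2\theta-1+\gamma)}$ is strictly positive, and the existence of a direction $\psi\in X_0$ whose radial supremum of $J_{n,\lambda}$ lies strictly below that threshold.

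Next, I would define the mountain pass level. Choosing $T>0$ large enough so that $e:=T\psi$ satisfies $\|e\|>\rho$ and $J_{n,\lambda}(e)<0$ (the latter being possible because $J_{n,\lambda}(t\psi)\to-\infty$ as $t\to\infty$, as already observed in the proof of Lemma~\ref{mp2}), and setting
$$\Gamma=\{\eta\in C([0,1],X_0):\eta(0)=0,\,\eta(1)=e\},\qquad c_\lambda=\inf_{\eta\in\Gamma}\max_{t\in[0,1]}J_{n,\lambda}(\eta(t)),$$
the first part of Lemma~\ref{mp2} yields $c_\lambda\geq\alpha$, while testing the infimum with the straight segment $\eta(t)=tT\psi$ and invoking Lemma~\ref{lemma3.3} gives $c_\lambda\leq\sup_{t\geq0}J_{n,\lambda}(t\psi)<D_1-D_2\lambda^{2\theta/(2\theta-1+\gamma)}$. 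The mountain pass theorem then produces a Palais--Smale sequence at level $c_\lambda$; since $c_\lambda$ satisfies \eqref{livello}, Lemma~\ref{palais} extracts a strongly convergent subsequence, whose limit $v_n\in X_0$ is a weak solution of \eqref{Pn} with $J_{n,\lambda}(v_n)=c_\lambda$, hence \eqref{necessario}.

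It remains to upgrade $v_n$ to a positive function. I would test the weak formulation \eqref{weak2} with $\varphi=-v_n^-\in X_0$: inequality \eqref{eur} (applied with $u_0$ replaced by $v_n$) bounds the left-hand side from below by $\|v_n\|^{2(\theta-1)}\|v_n^-\|^2\geq0$, whereas on the set $\{v_n^->0\}$ the identity $v_n^+=0$ makes the critical term on the right-hand side vanish and reduces the singular term to $-\lambda n^\gamma\int_\Omega v_n^-\,dx\leq 0$. Squeezing forces $\|v_n^-\|=0$, so $v_n\geq 0$ a.e.; since $v_n\not\equiv 0$ (because $J_{n,\lambda}(v_n)\geq\alpha>0=J_{n,\lambda}(0)$), the maximum principle \cite[Proposition 2.2.8]{S}, applied exactly as at the end of the proof of Theorem~\ref{prima}, finally yields $v_n>0$ in $\Omega$.

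The main obstacle has already been absorbed into the three preparatory lemmas; what remains to watch in the assembly is the tight squeeze of $c_\lambda$ into the interval $[\alpha,\,D_1-D_2\lambda^{2\theta/(2\theta-1+\gamma)})$. The lower bound relies on the mountain pass geometry being robust enough that every admissible path from $0$ to $e$ crosses the $\rho$-sphere, while the strict upper bound forces the distinguished direction $\psi$ from Lemma~\ref{lemma3.3} to be used as the endpoint of the testing path. This delicate interplay between the compactness window coming from Lemma~\ref{palais} and the energy estimate of Lemma~\ref{lemma3.3} is exactly what dictates the choice of $\overline\lambda$ and is the sole reason one is forced to take $\lambda$ small.
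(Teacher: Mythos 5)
Your proposal is correct and follows essentially the same route as the paper: set $\overline\lambda=\min\{\lambda_0,\lambda_1\}$, invoke the mountain pass theorem with the geometry from Lemma~\ref{mp2}, sandwich the mountain pass level between $\alpha$ and $D_1-D_2\lambda^{2\theta/(2\theta-1+\gamma)}$ via Lemmas~\ref{mp2} and~\ref{lemma3.3}, conclude via the Palais--Smale condition of Lemma~\ref{palais}, and finally obtain positivity by testing with the negative part and applying the maximum principle. The only cosmetic deviations are the choice of admissible paths (fixed endpoint $e$ versus the paper's condition $J_{n,\lambda}(g(1))<0$) and the sign of the test function ($-v_n^-$ instead of $v_n^-$), both of which are equivalent.
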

\begin{proof}
Let $\overline{\lambda}=\min\{\lambda_0,\lambda_1\}$, with $\lambda_0$ and $\lambda_1$ given respectively in Lemmas \ref{mp} and \ref{lemma3.3}.
Let us consider $\lambda\in(0,\overline{\lambda})$.
By Lemma \ref{mp2}, functional $J_{n,\lambda}$ verifies the mountain pass geometry. For this, we can set the critical mountain pass level as
\begin{equation}\label{mm}
c_{n,\lambda}=\inf_{g\in\Gamma}\max_{t\in[0,1]}J_{n,\lambda}(g(t)),
\end{equation}
where
$$\Gamma=\left\{g\in C([0,1],X_0):\,\,g(0)=0,\; J_{n,\lambda}(g(1))<0\right\}.$$
By Lemmas \ref{mp2} and \ref{lemma3.3}, we get
$$
0<\alpha<c_{n,\lambda}\leq\sup_{t\geq0} J_{n,\lambda}(t\psi)<D_1-D_2\lambda^{2\theta/(2\theta-1+\gamma)}.
$$
Hence, by Lemma \ref{palais} functional $J_{n,\lambda}$ satisfies the Palais--Smale condition at level $c_{n,\lambda}$. Thus, the mountain pass theorem gives the existence of a critical point $v_n\in X_0$ for $J_{n,\lambda}$ at level $c_{n,\lambda}$. Since $J_{n,\lambda}(v_n)=c_{n,\lambda}>\alpha>0=J_{n,\lambda}(0)$, $v_n$ is a nontrivial solution of \eqref{Pn}. Furthermore, by \eqref{weak2} with test function $\varphi=v^-_n$ and inequality \eqref{eur}, we can see that $\|v^-_n\|=0$, which implies $v_n$ is nonnegative. By the maximum principle in \cite[Proposition 2.2.8]{S}, we have that $v_n$ is a positive solution of \eqref{Pn}, concluding the proof.
\end{proof}
\section{A second solution for problem \eqref{P}}\label{sec finale}
In this last section we prove the existence of a second solution for problem \eqref{P}, as a limit of solutions of the perturbed problem \eqref{Pn}. For this, here we need the assumption that $\partial\Omega$ is continuous, in order to apply a density argument for space $X_0$.
\begin{proof}[\textit Proof of Theorem \ref{main}]
Let us consider $\overline{\lambda}$ as given in Theorem \ref{seconda} and let $\lambda\in(0,\overline{\lambda})$.
Since $\overline{\lambda}\leq\lambda_0$, by Theorem \ref{prima} we know that problem \eqref{P} admits a solution $u_0$ with $J_\lambda(u_0)<0$.

In order to find a second solution for \eqref{P}, let $\{v_n\}_n$ be a family of positive solutions of \eqref{Pn}.
By \eqref{S}, \eqref{subadd}, \eqref{necessario} and the H\"older inequality, we have
$$
\begin{aligned}
D_1-D_2\lambda^{\frac{2\theta}{2\theta-1+\gamma}}&> J_{n,\lambda}(v_n)-\frac{1}{2^*_s}\langle J'_{n,\lambda}(v_n), v_n\rangle\\
&=\left(\frac{1}{2\theta}-\frac{1}{2^*_s}\right)\|v_n\|^{2\theta}-\frac{\lambda}{1-\gamma}\int_\Omega[(v_n+1/n)^{1-\gamma}-(1/n)^{1-\gamma}]dx\\
&\quad+\frac{\lambda}{2^*_s}\int_\Omega(v_n+1/n)^{-\gamma}v_n dx\\
&\geq\left(\frac{1}{2\theta}-\frac{1}{2^*_s}\right)\|v_n\|^{2\theta}-\frac{\lambda}{1-\gamma}\int_\Omega v_n^{1-\gamma}dx\\
&\geq\left(\frac{1}{2\theta}-\frac{1}{2^*_s}\right)\|v_n\|^{2\theta}-\frac{\lambda}{1-\gamma}|\Omega|^{\frac{2^*_s-1+\gamma}{2^*_s}}S^{-\frac{1-\gamma}{2}}\|v_n\|^{1-\gamma},
\end{aligned}
$$
which yields that $\{v_n\}_n$ is bounded in $X_0$, being $1-\gamma<1<2\theta$.
Hence, by using \cite[Lemma 8]{SV} and \cite[Theorem 4.9]{B}, there exist a subsequence, still
denoted by $\{v_n\}_n$, and a function $v_0\in X_0$
such that
\begin{equation}\label{2.4}
\begin{array}{ll}
v_n\rightharpoonup v_0\text{ in }X_0,\quad & \|v_n\|\rightarrow \mu, \\
v_n\rightharpoonup v_0\mbox{ in } L^{2^*_s}(\Omega),\quad &\left\|v_n-v_0\right\|_{2^*_s}\to\ell,\\
v_n\rightarrow v_0\mbox{ in } L^p(\Omega)\text{ for any }p\in[1,2^*_s),\quad&v_n\to v_0\text{ a.e.  in }\Omega.
\end{array}
\end{equation}
We want to prove that $v_n\to v_0$ in $X_0$ as $n\to\infty$. When $\mu=0$, by \eqref{2.4} we have $v_n\to 0$ in $X_0$ as $n\to\infty$. For this, we suppose $\mu>0$.
We observe that
$$0\leq\frac{v_n}{(v_n+1/n)^\gamma}\leq v_n^{1-\gamma}\quad\mbox{a.e. in }\Omega,
$$
so by the Vitali convergence theorem and \eqref{2.4}, it follows that
\begin{equation}\label{2.5}
\lim_{n\to\infty}\int_{\Omega}\frac{v_n}{(v_n+1/n)^\gamma}dx=\int_{\Omega}v_0^{1-\gamma}dx.
\end{equation}
By using \eqref{weak2} for $v_n$ and test function $\varphi=v_n$, by \eqref{2.4} and \eqref{2.5}, as $n\to\infty$ we have
\begin{equation}\label{4.2}
\mu^{2\theta}-\lambda \int_\Omega v_0^{1-\gamma}dx+\|v_n\|^{2^*_s}_{2^*_s}=o(1).
\end{equation}
For any $n\in\mathbb N$, by an immediate calculation in \eqref{Pn} we see that
$$\|v_n\|^{2\theta}(-\Delta)^sv_n\geq\min\left\{1,\frac{\lambda}{2^\gamma}\right\}\quad\mbox{in }\Omega.
$$
Thus, since $\{v_n\}_n$ is bounded in $X_0$ and by using a standard comparison argument (see \cite[Lemma 2.1]{BMP}) and the maximum principle in \cite[Proposition 2.2.8]{S}, for any $\widetilde{\Omega}\subset\subset\Omega$, there exists a constant $c_{\widetilde{\Omega}}>0$ such that
\begin{equation}\label{basso}
v_n\geq c_{\widetilde{\Omega}}>0,\quad\mbox{a.e. in }\widetilde{\Omega}\mbox{ and for any }n\in\mathbb N.
\end{equation}
Now, let $\varphi\in C^\infty_0(\Omega)$, with $\mbox{\small supp }\varphi=\widetilde{\Omega}\subset\subset\Omega$. By \eqref{basso} we have
$$0\leq\left|\frac{\varphi}{(v_n+1/n)^\gamma}\right|\leq\frac{|\varphi|}{c_{\widetilde{\Omega}}^\gamma}\quad\mbox{a.e. in }\Omega,
$$
so that by \eqref{2.4} and the dominated convergence theorem
\begin{equation}\label{2.6}
\lim_{n\to\infty}\int_{\Omega}\frac{\varphi}{(v_n+1/n)^\gamma}dx=\int_{\Omega}v_0^{-\gamma}\varphi dx.
\end{equation}
Thus, by considering \eqref{weak2} for $v_n$, sending $n\to\infty$ and using \eqref{2.4} and \eqref{2.6}, for any $\varphi\in C^\infty_0(\Omega)$ it follows that
\begin{equation}\label{4.3}
\mu^{2(\theta-1)}\langle v_0,\varphi\rangle-\lambda \int_\Omega v_0^{-\gamma}\varphi dx
 +\int_\Omega v_0^{2^*_s-1}\varphi dx=0.
\end{equation}
However, since $\partial\Omega$ is continuous, by \cite[Theorem 6]{FSV} the space $C^\infty_0(\Omega)$ is dense in $X_0$. Thus, by a strandard density argument, \eqref{4.3} holds true for any $\varphi\in X_0$.
By combining \eqref{4.2} and \eqref{4.3} with test function $\varphi=v_0$, as $n\to\infty$ we get
$$
\mu^{2(\theta-1)}(\mu^2-\|v_0\|^2)=\|v_n\|^{2^*_s}_{2^*_s}-\|v_0\|^{2^*_s}_{2^*_s}+o(1)
$$
and by \eqref{2.4} and \cite[Theorem 2]{BL} we have
\begin{equation}\label{crucial}
\mu^{2(\theta-1)}\lim_{n\to\infty}\|v_n-v_0\|^2=\ell^{2^*_s}.
\end{equation}
If $\ell=0$, then $v_n\to v_0$ in $X_0$ as $n\to\infty$, since $\mu>0$.

Let us suppose $\ell>0$ by contradiction.
Arguing as in Lemma \ref{palais}, by \eqref{2.4} and \eqref{crucial} we get \eqref{eq31}.
Therefore, being $\theta\geq1$, by \eqref{subadd}, \eqref{eq31}, \eqref{necessario}, \eqref{2.4}, the H\"older inequality and the Young inequality, we have 
$$
\begin{alignedat}4
D_1-D_2\lambda^{\frac{2\theta}{2\theta-1+\gamma}}&>J_{n,\lambda}(v_n)-\frac{1}{2^*_s}\left\langle J'_{n,\lambda}(v_n),v_n\right\rangle\\
&\geq\left(\frac{1}{2\theta}-\frac{1}{2^*_s}\right)\left(\mu^{2\theta}+\|v_0\|^{2\theta}\right)-\lambda\left(\frac{1}{1-\gamma}+\frac{1}{2^*_s}\right)|\Omega|^{\frac{2^*_s-1+\gamma}{2^*_s}}S^{-\frac{1-\gamma}{2}}\|v_0\|^{1-\gamma}\\
&\geq\left(\frac{1}{2\theta}-\frac{1}{2^*_s}\right)S^{\frac{2^*_s\theta}{2^*_s-2\theta}}\\
&\quad-\left(\frac{1}{2\theta}-\frac{1}{2^*_s}\right)^{-\frac{1-\gamma}{2\theta-1+\gamma}}\left[\lambda\left(\frac{1}{1-\gamma}+\frac{1}{2^*_s}\right)|\Omega|^{\frac{2^*_s-1+\gamma}{2^*_s}}S^{-\frac{1-\gamma}{2}}\right]^{\frac{2\theta}{2\theta-1+\gamma}}
\end{alignedat}
$$
which is the desired contradiction, thanks to \eqref{costanti}.

Therefore, $v_n\to v_0$ in $X_0$ as $n\to\infty$ and by \eqref{weak} and \eqref{weak2} we immediately see that $v_0$ is a solution of problem \eqref{P+}. Furthermore, by \eqref{necessario} we have $J_\lambda(v_0)\geq\alpha>0$, which also implies that $v_0$ is nontrivial. Reasoning as at the end of the proof of Theorem \ref{seconda}, we conclude that $v_0$ is a positive solution of \eqref{P+} and so $v_0$ also solves problem \eqref{P}.
Finally, $v_0$ is different from $u_0$, since $J_\lambda(v_0)>0>J_\lambda(u_0)$.
\end{proof}

\section*{Acknowledgments}
The author is supported by {\em Coordena\c c\~ao de Aperfei\c conamento de pessoal de n\'ivel superior} through the fellowship PNPD--CAPES 33003017003P5.
The author is member of the {\em Gruppo Nazionale per l'Analisi Ma\-tema\-tica, la Probabilit\`a e
le loro Applicazioni} (GNAMPA) of the {\em Istituto Nazionale di Alta Matematica ``G. Severi"} (INdAM).

\end{document}